\documentclass[12pt]{article}

\usepackage{geometry}
 \geometry{
 letterpaper,
 total={165mm,225mm},
 left=25mm,
 top=25mm,
 }

\usepackage{pifont}
\usepackage{soul}

\usepackage{enumitem}

\setenumerate{label=(\alph*)}
\setitemize{label=\scriptsize{\ding{118}}}

\usepackage[cmex10]{amsmath}
\interdisplaylinepenalty=2500
\usepackage{amsthm, amssymb}
\usepackage{booktabs,cite}

\usepackage{psfrag}
\usepackage{mathtools}

\newcommand{\imi}{\mathsf{i}}

\usepackage{hyperref}
\usepackage{breakurl}
\allowdisplaybreaks

\theoremstyle{theorem}
\newtheorem{theorem}{Theorem}[section]
\newtheorem{lemma}[theorem]{Lemma}

\newtheorem{corollary}[theorem]{Corollary}
\newtheorem{alg}[theorem]{Algorithm}

\theoremstyle{definition}
\newtheorem{definition}[theorem]{Definition}

\usepackage{framed}

\numberwithin{equation}{section}
\numberwithin{figure}{section}
\newcommand{\coloneqq}{:=}
\newcommand{\spar}{\boldsymbol{\Phi}}

\newcommand{\R}{\mathbb{R}}

\newcommand{\Z}{\mathbb{Z}}

\newcommand{\sph}{S}
\newcommand{\hatK}{\hat{K}}
\newcommand{\K}{K}
\newcommand{\F}{f}
\newcommand{\G}{g}
\newcommand{\domm}{\Delta}
\newcommand{\To}{\mathcal T}
\newcommand{\Vo}{\mathcal{V}}

\newcommand{\dr}{\mathrm{d}r}
\newcommand{\dom}{\mathrm{d}\omega}
\newcommand{\dt}{\mathrm{d}t}
\newcommand{\dx}{\mathrm{d}x}
\newcommand{\suppp}{\mathrm{supp}}

\newcommand{\fn}{\mathbf{f}}
\newcommand{\gn}{\mathbf{g}}
\DeclareMathOperator{\Kn}{\mathbf{K}}
\DeclareMathOperator{\In}{\mathbf{I}}

\makeatletter
\newcommand*\bigcdot{\mathpalette\bigcdot@{.6}}
\newcommand*\bigcdot@[2]{\mathbin{\vcenter{\hbox{\scalebox{#2}{$\m@th#1\bullet$}}}}}
\makeatother
\newcommand\inner[2]{{#1}\bigcdot {#2}}

\newcommand{\D}{D_\RR}

\newcommand{\trans}{\mathsf{T}}
\newcommand{\rmd}{\mathrm d}
\newcommand*\dd{\mathop{}\!\mathrm{d}}
\newcommand{\eps}{\epsilon}
\newcommand{\ph}{\varphi}

\newcommand\abs[1]{\left\vert#1\right\vert}
\newcommand\sabs[1]{\lvert#1\rvert}
\newcommand\norm[1]{\left\lVert#1\right\rVert}
\newcommand\snorm[1]{\lVert#1\rVert}

\newcommand\set[1]{\left\{#1\right\}}

\newcommand\sset[1]{\{#1\}}
\newcommand{\om}{\omega}
\newcommand{\al}{\alpha}
\newcommand{\la}{\lambda}
\newcommand{\RR}{R}
\newcommand{\pp}{p}
\newcommand{\qq}{q}
\newcommand{\PP}{P}
\newcommand{\QQ}{Q}
\newcommand{\ii}{i}
\newcommand{\MM}{M}
\newcommand{\NN}{N}

\newcommand{\kl}[1]{\left(#1\right)}
\newcommand{\bkl}[1]{\left(#1\right)}

\numberwithin{equation}{section}
\usepackage{units}

\begin{document}

\title{Inversion of the attenuated V-line transform for SPECT with Compton cameras}

\renewcommand{\thefootnote}{\fnsymbol{footnote}}
\author{Markus Haltmeier\footnotemark[2] \and Sunghwan Moon\footnotemark[3] \and Daniela Schiefeneder\footnotemark[2]}

\date{\small \footnotemark[2] Department of Mathematics, University of Innsbruck\\
Technikerstrasse 13, A-6020 Innsbruck, Austria\\
  { \tt \{Daniela.Schiefeneder,Markus.Haltmeier\}@uibk.ac.at}\\[1em]
\footnotemark[3] Department of Mathematical Sciences, Ulsan National Institute of Science and Technology\\
Ulsan 44919, Republic of Korea.\\
{\tt shmoon@unist.ac.kr}}

% The paper headers
\markboth{The attenuated V-line transform for SPECT with Compton cameras}%
{The attenuated V-line transform for SPECT with Compton cameras}

% make the title area
\maketitle

% As a general rule, do not put math, special symbols or citations
% in the abstract or keywords.
\begin{abstract}
The Compton camera is  a promising alternative to the Anger camera for imaging gamma radiation,  with the potential to significantly increase the sensitivity of SPECT. Two-dimensional Compton camera image reconstruction can be implemented by inversion of the V-line transform, which integrates the emission distribution over V-lines (unions of two half-lines), that have vertices on a surrounding detector array.
Inversion of the V-line transform  without attenuation has recently been addressed by several authors. However, it is well known from standard SPECT that ignoring attenuation can significantly degrade the quality of the reconstructed image. In this paper we address this issue and study  the attenuated V-line transform accounting for attenuation of photons in SPECT with Compton cameras. We derive an analytic inversion approach based on circular harmonics expansion, and show uniqueness of reconstruction for the attenuated V-line transform. We further develop a discrete image reconstruction algorithm based on our analytic studies, and present numerical results that demonstrate the effectiveness of our algorithm.

\textbf{Keywords:}
Compton cameras,
SPECT,
attenuation correction,
V-line transform,
Radon transform,
image reconstruction.
\end{abstract}

\section{Introduction}
\label{sec:intro}

Single photon emission computed tomography (SPECT)
is a major medical diagnosis tool for functional imaging. Current SPECT
systems are based on the Anger camera for gamma ray detection \cite{anger1958scintillation}, which  uses collimators and typically records only one out of $10\,000$ actually emitted photons.  This yields to a large noise level  despite long recording times, and  consequently results in poor spatial and temporal resolution.  In order to increase the number of recorded photons, the concept of Compton cameras has been developed in~\cite{EveFleTidNig77,Sin83,TodNigEve74}.

The data in SPECT with Compton cameras consist of averages of the marker distribution over conical surfaces  or V-shaped lines.
Several  authors studied analytical approaches for image reconstruction  from Compton camera data~\cite{Allmaras13,basko1997fully,BasZenGul98,CreBon94,GouAmb13,Hal14a,hawkins1988circular,HirTom03,JunMoo15,Moon16a,moon2016analytic,Maxim2009,MorEtAl10,schiefeneder2016radon,Smi05,smith2011line,Terzioglu15}. However, in all these works the effect of the attenuation of photon has been neglected. As  in the case of standard SPECT, this can result in significant degradation of image quality.  In this work we establish a analytic reconstruction approach
for Compton camera imaging accounting for attenuation, which, to the best of our knowledge,  is the first in such a direction.

\begin{psfrags}
\psfrag{w}{\scriptsize $\psi$}
\psfrag{B}{\scriptsize $\beta$}
\psfrag{1}{(a)}
\psfrag{2}{(b)}
\psfrag{b}{\scriptsize $b$}
\psfrag{a}{\scriptsize $a$}
\psfrag{S}{\scriptsize $\sph_R$}
\psfrag{s}{\scriptsize $A$}
\psfrag{d}{\scriptsize $B$}

\begin{figure}[tb!]
\centering \includegraphics[width =\textwidth]{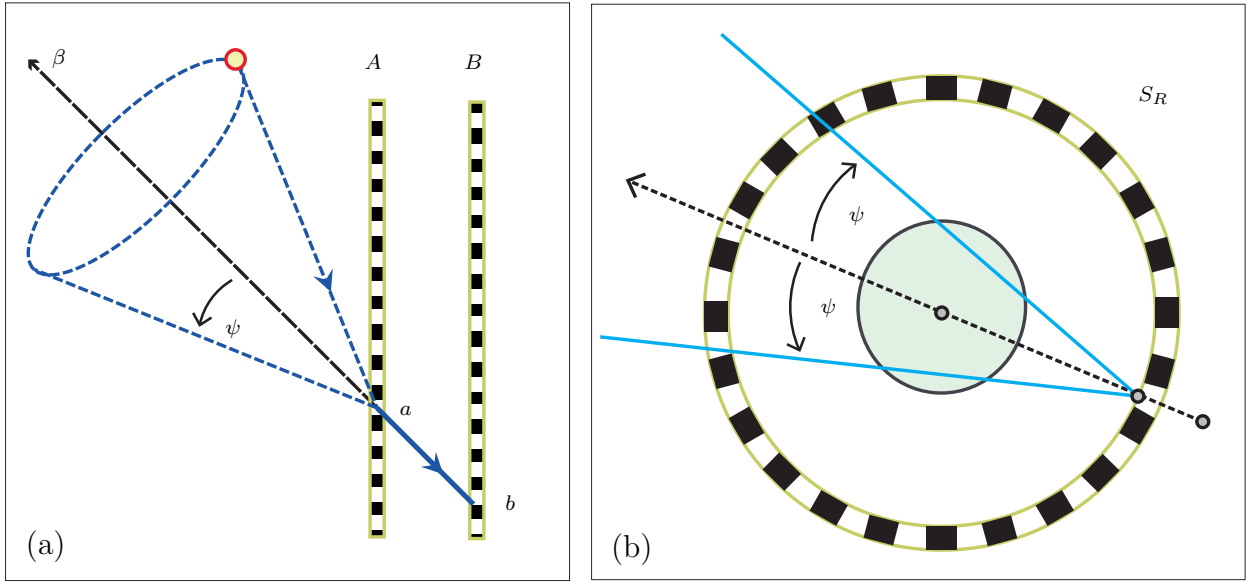}
\caption{(a) Compton camera consists of  two detector arrays  and any observed photon can be traced back to the surface of a cone.
(b) One-dimensional circular Compton camera. The conical integrals reduce to  integrals over V-shaped, with the vertex on $\sph_R$ and the symmetry axis pointing  to the origin.\label{fig:compton}}
\end{figure}
\end{psfrags}

\subsection{SPECT with Compton cameras}

In SPECT,  weakly radioactive tracers are given to a patient and are  detected through the  emission of gamma ray photons. Standard devices for photon detection in SPECT use Anger cameras based on collimation. These  kind of detectors only record photons that enter the detector vertically and therefore remove most photons. Compton cameras do not require a collimator  and in principle are capable of recording   all photons  that are emitted in the direction of the detector
array~\cite{EveFleTidNig77,Sin83,TodNigEve74}. Such devices consist of a scatter detector  array $A$ and an absorption detector array $B$.
As shown in Fig.~\ref{fig:compton}(a), a gamma ray photon arriving at the Compton camera undergoes Compton scattering
in $A$ and is absorbed in $B$.
Both detector arrays are position and energy sensitive,  and the measured energies can be used to determine the scattering angle via the Compton scattering formula.
One concludes  that the detected photon must have been emitted on the surface of a circular cone.
Consequently, for  a distribution of tracers, the Compton camera approximately provides
 integrals of  the marker
distribution over conical surfaces.

In this paper we consider two-dimensional Compton camera imaging,
where the conical surfaces reduce to V-lines with vertices on a circle; see Fig.~\ref{fig:compton}(b).
This two-dimensional version arises  for one-dimensional Compton camera proposed in \cite{BasZenGul97}  when the marker distribution is either supported in a plane or the detectors are collimated to this plane.

\subsection{The attenuated V-line transform}

We denote by $\D\coloneqq \sset{x\in \R^2 \mid \norm{x} < \RR }$  the unit disc in $\R^2$ and by $\sph_R$ the surrounding circle.
$C_c^\infty(\D)$ denotes the set  of all  smooth functions $\F \colon \R^2 \to \R$ with $\suppp (\F) \subseteq \D$.
For any angle $\ph \in \R$ we write $\spar(\ph)  \coloneqq (\cos\ph,\sin\ph)$ and $\spar(\ph)^\bot  \coloneqq (-\sin\ph, \cos\ph)$ such that
$(\spar(\ph) ,\spar(\ph)^\bot)$ forms a positive oriented orthonormal basis of $\R^2$. Further we denote by $\mu \in \R$ any attenuation value. In practice
we have $\mu\geq 0$. However since most of the following holds for general $\mu$,
we make restrictions on $\mu$ only when necessary.

\begin{definition}
The attenuated V-line transform $\Vo_\mu \F  \colon [0, 2\pi)  \times (0, \pi/2)   \to \R$ (with attenuation parameter $\mu$) of $\F \in C_c^\infty (\D)$ is defined by
\begin{equation}
  \label{eq:Vtrafo}
\Vo_\mu \F(\ph, \psi)  \coloneqq \sum_{\sigma = \pm 1}\int_0^\infty \F(\RR\spar(\ph)-r\spar(\ph - \sigma\psi)  )  \, e^{-\mu r} \, \dr \,.
\end{equation}
\end{definition}

The attenuated V-line transform  consists  of integrals of the emitter  distribution
over V-lines (the union of two half-lines) having the vertex $\RR \spar(\ph)$, symmetry axis  $\set{-r\spar(\ph) \mid r >0}$ and half opening angle $\psi$. The factor $e^{-\mu r}$ accounts for the attenuation of photons when propagating a distance $r$ in homogeneous media with attenuation value $\mu$.
In our numerical simulation studies we use
$\mu = \unit[0.15]{/cm}$, which is a realistic value
for soft tissue.

\subsection{Outline of main results}

In this paper we study the problem  of reconstructing the emitter distribution
$\F$ from  $\Vo_\mu \F$. Our main  contributions  can be summarized
as follows.

\begin{itemize}
\item[\ding{192}] {\scshape  Fourier series decomposition:}
Let $\F_n$ and $\G_n$ denote the Fourier  coefficients
with respect to the polar angle of $\F$ and  the vertex position of  $\Vo_\mu\F$, respectively.
In  Section~\ref{sec:fourier}, we derive an integral equation
for $\F_n$  in terms of    $\G_n$ with  an explicitly  given kernel of  the
generalized Abel type (see Eq.~\eqref{eq:V3}).

\item[\ding{193}] {\scshape  Uniqueness of reconstruction:}
In Section~\ref{sec:uni} we show that the generalized  Abel  equation \eqref{eq:V3}
has a unique solution and therefore $\F_n$ can be uniquely  recovered from $\G_n$ by solving~\eqref{eq:V3}; see Thm.~\ref{thm:uni}.
In particular, this implies  uniqueness of reconstruction of the attenuated V-line transform in the sense that  any data $\Vo_\mu \F$  corresponds  to exactly one
emitter distribution $\F$.

\item[\ding{194}] {\scshape  Numerical algorithm:} In
Section~\ref{sec:num} we derive a numerical algorithm for solving~\eqref{eq:V3}. Together with the FFT algorithm, this gives an efficient   Fourier discrete reconstruction algorithm for reconstructing $\F$  from $\Vo_\mu\F$. The proposed algorithm requires
only  $\mathcal{O} (\NN^2)$ floating point operations for reconstructing $\F$ at
 $\NN$ discretization points.

\item[\ding{195}] {\scshape  Numerical studies:}
In Section~\ref{sec:results} we present detailed numerical simulation studies
and demonstrate that our algorithm yields  accurate and fast reconstructions for
data with and without Poisson noise.
 For reconstructing  $\F$  at $40\, 000$ discretization points, our algorithm
 only requires about $1/40$ seconds on a standard PC.
We further demonstrate that our
 method is stable  with respect  to the selection of the   parameters involved.
 \end{itemize}

To the  best of our knowledge, these are the first  analytic results for Compton camera image reconstruction  accounting for  non-vanishing attenuation.

For other  Radon transforms, similar  approaches  have been previously and successfully applied  in \cite{Cor63,Ambarsoumian10,AmbMoo13,AmbRoy16,Hansen81circular,Per75,puro2001cormack,you1999unified}. However, in all  these works the arising integral equations satisfy standard conditions needed in order to apply standard well-posedness results for generalized Abel equations.
For \eqref{eq:V3}, one main assumption required for such results is violated, since the kernels turn out to have zeros on the diagonal. Nevertheless, by using a recent  result of  \cite{schiefeneder2016radon} we are able to establish solution uniqueness of the attenuated V-line transform.

\section{Fourier series decomposition}
\label{sec:fourier}

Throughout the following, $\mu \in \R$ denotes a fixed attenuation value.
Our inversion approach uses the Fourier
series with respect to the angular variables,
\begin{align} \label{eq:fn}
\F\kl{r\spar(\ph)}
&= \sum_{n \in \Z} \F_n(r)\,e^{\imi n\ph} \,,
\\  \label{eq:gn}
(\Vo_\mu\F)(\ph,\psi)    &=  \sum_{n \in \Z} \G_n(\psi)\,e^{\imi  n\ph}\,,
 \end{align}
where the Fourier coefficients  of the emitter  distribution  and the
corresponding attenuated V-line data  are defined by
\begin{align}\label{eq:fnn}
 \F_n(r)
 &\coloneqq  \frac{1}{2\pi}\int^{2\pi}_0 \F(r\spar(\ph))\,e^{-\imi n\ph}\rmd\ph \,,
 \\ \label{eq:gnn}
 \G_n(\psi)
 &\coloneqq \frac{1}{2\pi}\int^{2\pi}_0 (\Vo_\mu\F)(\ph,\psi)\,e^{-\imi  n\ph}\rmd{\ph} \,.
 \end{align}
Our strategy for inverting $\Vo_\mu$ is to recover each $\F_n$ from $\G_n$. For that purpose we derive a one-dimensional integral  equation for $\F_n$  in terms  of $\G_n$, which will
 subsequently be  solved theoretically and numerically.

We will  make use of the exponential Radon transform
$\To_{\mu} \F  \colon [0, 2\pi)  \times \R    \to \R$ defined by
\begin{equation}
(\To_{\mu} \F)(\al, s) \coloneqq \int_{\R} \F( s \spar(\al) + t \spar(\al)^\bot  )
\, e^{\mu  t} \, \dt \,.
\end{equation}
The exponential Radon transform integrates  the function $\F$ over the line
$\set{x \in \R^2 \mid  \inner{\spar(\al)}{x} = s}$  including the  weight  $e^{\mu t}$. It appears in image reconstruction  in SPECT with Anger cameras.
Its  inversion has been  addressed by many authors (see, for example, \cite{bellini79compensation,hawkins1988circular,inouye1989image,puro2001cormack,tretiak80exponential,yarman07new}).

\subsection{Auxiliary results}
\label{sec:aux}

We start our analysis by writing $\Vo_\mu\F$ as the sum of two exponential Radon transforms.

\begin{lemma}\label{lem:vr}
Suppose $\F \in C_c^\infty(\D)$. Then
\begin{equation}
	(\Vo_\mu\F)(\ph, \psi)
	= \label{eq:vr1}
	e^{-\RR \mu \cos(\psi)}
	\sum_{\sigma = \pm 1} (\To_{-\mu} \F) (\pi/2 + \ph-\sigma \psi, \sigma\RR \sin(\psi)) \,.
\end{equation}
\end{lemma}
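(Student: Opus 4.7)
The plan is to handle the two half-line integrals separately by rewriting each one as an integral over the full line carrying the half-line, and then reading off the corresponding exponential Radon transform.

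Fix $\sigma \in \{\pm 1\}$ and consider the term
\[
I_\sigma \coloneqq \int_0^\infty \F\bkl{\RR\spar(\ph)-r\spar(\ph - \sigma\psi)}\,e^{-\mu r}\,\dr.
\]
The integrand is supported on the half-line with vertex $\RR\spar(\ph)$ and direction $-\spar(\ph-\sigma\psi)$. To match the parametrization used in the exponential Radon transform, I would set $\al \coloneqq \pi/2 + \ph - \sigma\psi$, so that $\spar(\al) = \spar(\ph-\sigma\psi)^\bot$ and $\spar(\al)^\bot = -\spar(\ph-\sigma\psi)$. Thus any point on that line can be written as $s\,\spar(\al) + t\,\spar(\al)^\bot$, and the direction of the half-line coincides with $\spar(\al)^\bot$.

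Next I would compute the $(s,t)$-coordinates of the vertex $\RR\spar(\ph)$. Taking inner products with $\spar(\al)$ and $\spar(\al)^\bot$ and using the addition formulas gives
\[
s = \RR\,\spar(\ph)\bigcdot\spar(\ph-\sigma\psi)^\bot = \sigma \RR\sin\psi,
\qquad
t_0 = \RR\,\spar(\ph)\bigcdot(-\spar(\ph-\sigma\psi)) = -\RR\cos\psi.
\]
The half-line is therefore $\{s\spar(\al) + t\spar(\al)^\bot : t \ge -\RR\cos\psi\}$, and the change of variable $t = r - \RR\cos\psi$ turns $I_\sigma$ into
\[
I_\sigma = e^{-\mu \RR\cos\psi}\int_{-\RR\cos\psi}^\infty \F\bkl{s\spar(\al)+t\spar(\al)^\bot}\,e^{-\mu t}\,\dt.
\]
Since $\F \in C_c^\infty(\D)$ and $|s| = \RR\sin\psi$, the integrand vanishes unless $s^2+t^2 < \RR^2$, i.e.\ $|t| < \RR\cos\psi$; in particular it vanishes on $(-\infty,-\RR\cos\psi)$. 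So the lower limit can be extended to $-\infty$ without changing the value, giving
\[
I_\sigma = e^{-\mu \RR\cos\psi}\,(\To_{-\mu}\F)(\pi/2 + \ph - \sigma\psi,\,\sigma \RR\sin\psi).
\]
Summing over $\sigma=\pm 1$ yields \eqref{eq:vr1}.

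This is essentially a bookkeeping argument, so I do not anticipate any serious obstacle; the one place to be careful is matching signs and orientations so that the direction of increasing $r$ along the V-line agrees with the direction of increasing $t$ along the Radon-transform line (this is why the attenuation parameter of $\Vo_\mu$ becomes $-\mu$ in $\To_{-\mu}$), and so that the vertex $\RR\spar(\ph)$ lands on the positive side of the support interval in $t$ (this is what allows the extension of the half-line integral to the full line for compactly supported $\F$).
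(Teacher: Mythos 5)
Your proof is correct and follows essentially the same route as the paper: the same choice $\al = \pi/2+\ph-\sigma\psi$, the same change of variables $t = r - \RR\cos\psi$ producing the factor $e^{-\mu\RR\cos\psi}$, and the same extension of the half-line integral to the full line justified by $\supp(\F)\subseteq \D$. The only difference is cosmetic — you make the support argument and the sign/orientation bookkeeping explicit, which the paper leaves implicit.
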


\begin{proof}
With  $\al := \tfrac{\pi}{2} + \ph - \sigma\psi$  we have $\spar(\ph - \sigma\psi) = -\spar(\al)^\bot$.
A change of variables yields
\begin{align*}
&\int_0^\infty \F(\RR\spar(\ph)-r\spar(\ph - \sigma\psi)  )  \, e^{-\mu r} \, \dr
\\
&=
\int_{-\RR \cos(\psi)}^\infty \F(\sigma\RR \sin (\psi) \spar(\al)
+t \spar(\al)^\bot  )  \, e^{-\mu   (\RR \cos(\psi) +t )} \, \dt
\\
&=
e^{-\mu \RR \cos (\psi)} \int_{\R}  \F(\sigma\RR\sin (\psi) \spar(\al)
+t \spar(\al)^\bot  )  \, e^{- t \mu} \, \dt
\\&=
e^{-\mu \RR \cos (\psi)} (\To_{-\mu} \F) (\pi/2 + \ph - \sigma\psi,  \sigma\RR\sin(\psi)) \,.
\end{align*}
Together with the definition of $\Vo_\mu \F$, this gives~\eqref{eq:vr1}.
\end{proof}

\begin{lemma}\label{lem:Rn}
Let $\F \in C_c^\infty(\D)$ and denote by   $(\To_{-\mu}\F)_n (s) \coloneqq \frac{1}{2\pi}\int^{2\pi}_0 (\To_{-\mu}\F)(\al,s)\,e^{-in\al}\rmd{\al}$.
 Then,  for $(n, s) \in \Z \times \R$,
 \begin{equation} \label{eq:Rn1}
 (\To_{-\mu}\F)_n(s) = \sum_{\sigma = \pm 1} \int_{\sabs{s}}^R 
\F_n(r)\, e^{ \sigma \mu \sqrt{r^2-s^2}}
  e^{ - \imi n \sigma \arccos (s/r)}
\,  \frac{r}{\sqrt{r^2-s^2}} \, \dr \,.
\end{equation}
\end{lemma}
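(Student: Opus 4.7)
The plan is to compute $(\To_{-\mu}\F)_n(s)$ directly from the definition by substituting the Fourier series \eqref{eq:fn} for $\F$, applying orthogonality to collapse the $\al$-integration, and then converting the remaining $t$-integral into an $r$-integral by the substitution $r = \sqrt{s^2+t^2}$.

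First, I would express the integrand of $\To_{-\mu}\F$ in polar form. Since $(\spar(\al),\spar(\al)^\bot)$ is a positively oriented orthonormal frame, the point $s\spar(\al) + t\spar(\al)^\bot$ has Euclidean norm $r = \sqrt{s^2+t^2}$ and polar angle $\al + \beta(s,t)$, where $\beta(s,t)$ is the polar angle of $(s,t)\in\R^2$, characterized by $\cos\beta = s/r$ and $\sin\beta = t/r$. Plugging the series $\F(r\spar(\ph)) = \sum_{k\in\Z} \F_k(r)\,e^{\imi k \ph}$ into the definition gives
\begin{equation*}
(\To_{-\mu}\F)(\al,s) = \sum_{k\in\Z} \int_\R \F_k\bigl(\sqrt{s^2+t^2}\bigr)\, e^{\imi k(\al+\beta(s,t))}\, e^{-\mu t}\,\dt \,.
\end{equation*}
Multiplying by $e^{-\imi n \al}/(2\pi)$ and integrating over $\al\in[0,2\pi)$, only the term $k=n$ survives by orthogonality of the characters, leaving
\begin{equation*}
(\To_{-\mu}\F)_n(s) = \int_\R \F_n\bigl(\sqrt{s^2+t^2}\bigr)\, e^{\imi n \beta(s,t)}\, e^{-\mu t}\,\dt \,.
\end{equation*}

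Next, I would split the $t$-integral into $t>0$ and $t<0$ and substitute $r = \sqrt{s^2+t^2}$ in each piece. For $\tau = \sign(t) = \pm 1$, one has $t = \tau\sqrt{r^2-s^2}$, $\dt = \frac{r}{\sqrt{r^2-s^2}}\,\dr$ (with the orientation reversal for $\tau = -1$ absorbed by flipping the limits), while $\beta(s,t) = \tau \arccos(s/r)$ since $\cos\beta = s/r$ and $\sin\beta$ has the sign of $t$. Using that $\F_n$ is supported in $[0,R]$ because $\F \in C_c^\infty(\D)$, both pieces combine into
\begin{equation*}
(\To_{-\mu}\F)_n(s) = \sum_{\tau=\pm 1}\int_{\sabs{s}}^R \F_n(r)\, e^{\imi n\tau \arccos(s/r)}\, e^{-\mu \tau \sqrt{r^2-s^2}}\,\frac{r}{\sqrt{r^2-s^2}}\,\dr \,.
\end{equation*}
Reindexing via $\sigma = -\tau$ yields exactly \eqref{eq:Rn1}.

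The computation is essentially mechanical, so the only real traps are bookkeeping ones: tracking the sign of $\sin\beta$ to identify $\beta$ with $\pm\arccos(s/r)$ on the correct branch, and handling the orientation-reversing change of variables on $t<0$ so that the two halves recombine into a single sum over $\sigma = \pm 1$ with matching sign conventions in the exponentials. Taking $\sigma = -\tau$ at the end (rather than $\sigma=\tau$) is what aligns the exponent $e^{-\mu t}$ coming from $\To_{-\mu}$ with the $+\sigma\mu$ appearing in the stated formula.
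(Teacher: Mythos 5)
Your proof is correct, and it reaches \eqref{eq:Rn1} by a slightly different route than the paper. The paper rewrites the exponential Radon transform as a planar integral against the one-dimensional delta distribution $\delta(\inner{x}{\spar(\al)}-s)$, passes to polar coordinates, shifts the angular variable, and then resolves the constraint via the composition formula $\delta(g(u))=\sum_i\delta(u-u_i)/\abs{g'(u_i)}$ with zeros $u_\pm=\pm\arccos(s/r)$; the two values of $\sigma$ arise from these two zeros. You instead substitute the Fourier series of $\F$ into the definition of $\To_{-\mu}\F$, use orthogonality of $e^{\imi k\al}$ to isolate the $n$-th harmonic (both arguments exploit the same rotational covariance, just packaged differently), and then convert the line parameter $t$ into the radial variable by $r=\sqrt{s^2+t^2}$ on each half-line $t\gtrless 0$; here the two values of $\sigma$ arise from the two signs of $t$. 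Your version is somewhat more elementary, since it avoids distributional calculus entirely and only needs a monotone change of variables, at the cost of an interchange of summation and integration, which is harmless because $\F\in C_c^\infty(\D)$ makes the Fourier coefficients rapidly decaying and the series uniformly convergent. Your sign bookkeeping ($\beta=\tau\arccos(s/r)$ with $\tau=\sign(t)$, then $\sigma=-\tau$) matches the paper's convention $e^{\sigma\mu\sqrt{r^2-s^2}}e^{-\imi n\sigma\arccos(s/r)}$, and the truncation of the upper limit to $R$ via $\supp(\F)\subseteq\D$ is exactly as in the paper.
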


\begin{proof}
Using the definitions of the Fourier coefficients, the exponential Radon transform and the one-dimensional $\delta$-distribution, we obtain
\begin{align*}
&2\pi (\To_{-\mu} \F)_n(s)
\\
&=
\int_{0}^{2\pi}
\int_{\R} \F( s \spar(\al) + t \spar(\al)^\bot  )
\, e^{- \mu t} \, e^{- \imi n \al}\, \dt  \, \rmd \al
\\
&=
\int_{0}^{2\pi}
\int_{\R^2} \F(x) \delta(\inner{x}{\spar(\al)} -s)  \, e^{- \mu \inner{x}{\spar(\al)^\bot}} \, e^{- \imi n \al} \, \dx  \, \rmd \al
\\
&=
\int_{0}^{2\pi}
\int_{0}^{2\pi}
\int_{0}^{\infty}
\F(r\spar(\om)) \delta(r \cos (\om- \al)-s) \\
& \hspace{0.1\textwidth}  \times e^{- \mu r \sin (\om- \al)}  \, e^{- \imi n \al} \, r \, \dr  \, \dom \, \rmd \al
\\
&=
\int_{0}^{2\pi}
\int_{0}^{2\pi}
\int_{0}^{\infty}
\F(r\spar(\om)) \delta(r \cos (u)-s) \\
& \hspace{0.1\textwidth}  \times e^{ -\mu r\sin (u)}  \, e^{- \imi n \om}\, e^{ \imi n u} \, r \, \dr \, \dom \, \rmd u
\\
&=
2\pi \int_{0}^{2\pi}
\int_{0}^{\infty}
\F_n(r) \delta(r \cos (u)- s) \,  e^{-\mu r\sin (u)} \,
e^{ \imi n u} \, r \, \dr \, \rmd u
\,.
\end{align*}
Recall $\delta( g(u) )  = \sum_i \frac{\delta(u_i)}{\abs{g'(u_i)}}$, where the sum is taken over all simple zeros of $g$.   The zeros of $g(u) = r \cos (u)-s$ are given by $u_\pm = \pm  \arccos (s/r)$. They satisfy $\abs{g'(u_i)}
= r \abs{\sin (\arccos (s/r))} = \sqrt{r^2-s^2}$, which yields \eqref{eq:Rn1}.
\end{proof}

\subsection{Relation between the Fourier coefficients}
\label{sec:expansion}

In this section  we derive two different relations between $\F_n$ and $\G_n$.
The first one (Thm.~\ref{thm:Vn}) is   well suited
for the numerical  implementation, see Section~\ref{sec:num}.
The second one (Lem.~\ref{lem:glk3}) will be used  for
uniqueness of  reconstruction.

\begin{theorem}[Generalized Abel equation \label{thm:Vn} for  $\F_n$]
Suppose $\F \in C_c^\infty(\D)$, and let $\F_n$ and $\G_n$ for  $n\in \Z$  denote the Fourier coefficients of $\F$ and $\Vo_\mu \F$.
 Then,
 \begin{equation} \label{eq:Rn}
\forall \psi \in \bkl{0, \pi/2}\colon \quad \G_n(\psi)  = 2 e^{-\mu \RR \cos(\psi)}
 \int_{\RR\sin(\psi)}^R \F_n(r) \frac{ r \, \K_n(\RR\sin(\psi), r)  }{\sqrt{r^2-\RR^2\sin(\psi)^2}}  \, \rmd r \,,
\end{equation}
with the kernel function
\begin{equation} \label{eq:Rnk}
\K_n(s, r) :=
\sum_{\sigma =\pm1}  \sigma^n e^{\sigma  \mu \sqrt{r^2-s^2}}  \cos \kl{ n ( \arcsin (\tfrac{s}{r}) - \sigma \arcsin(\tfrac{s}{\RR}))}   \,.
\end{equation}
\end{theorem}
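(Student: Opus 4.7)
The plan is to combine Lemmas \ref{lem:vr} and \ref{lem:Rn} by taking the $n$-th Fourier coefficient in $\ph$ of the identity \eqref{eq:vr1} and then simplifying the resulting double sum over $\sigma, \tau \in \{\pm 1\}$ until the kernel $\K_n$ emerges.

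First, I would start from Lemma \ref{lem:vr} and insert its right hand side into \eqref{eq:gnn}. For each fixed $\sigma \in \{\pm 1\}$, the substitution $\al = \pi/2 + \ph - \sigma \psi$ transforms the $\ph$-integral into the $n$-th Fourier coefficient $(\To_{-\mu}\F)_n$ of the exponential Radon transform, evaluated at the shifted argument $s_\sigma \coloneqq \sigma \RR\sin\psi$. This introduces a phase factor $e^{\imi n\pi/2} e^{-\imi n \sigma \psi}$ in front of $(\To_{-\mu}\F)_n(s_\sigma)$, giving
\begin{equation*}
\G_n(\psi)
=
e^{-\RR\mu\cos\psi} \, e^{\imi n\pi/2}
\sum_{\sigma = \pm 1} e^{-\imi n \sigma \psi} \, (\To_{-\mu}\F)_n(\sigma \RR\sin\psi)\,.
\end{equation*}

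Next, I would apply Lemma \ref{lem:Rn}, noting that $\sabs{\sigma \RR\sin\psi} = \RR\sin\psi$ for $\psi \in (0, \pi/2)$, so both integrals run from $\RR\sin\psi$ to $\RR$. The crucial simplification is the elementary identity $\arccos(\sigma x) = \pi/2 - \sigma\arcsin(x)$ (valid for both $\sigma = \pm 1$), which turns the factor $e^{-\imi n \tau \arccos(s_\sigma/r)}$ from Lemma \ref{lem:Rn} into $e^{-\imi n \tau \pi/2}\, e^{\imi n \tau \sigma \arcsin(s/r)}$, where $s \coloneqq \RR\sin\psi$. Writing also $\psi = \arcsin(s/\RR)$, I would collect all the resulting phases and arrive at
\begin{equation*}
\G_n(\psi)
= e^{-\RR\mu\cos\psi} \int_{s}^{\RR}
\frac{r\, \F_n(r)}{\sqrt{r^2 - s^2}}
\, \Sigma_n(s,r) \, \dr \,,
\end{equation*}
where
\begin{equation*}
\Sigma_n(s,r) = \sum_{\tau=\pm 1} e^{\imi n \pi (1-\tau)/2} \, e^{\tau \mu \sqrt{r^2-s^2}} \sum_{\sigma = \pm 1} e^{\imi n \sigma (\tau \arcsin(s/r) - \arcsin(s/\RR))}\,.
\end{equation*}

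Finally, the inner sum over $\sigma$ collapses to $2 \cos(n(\tau \arcsin(s/r) - \arcsin(s/\RR)))$. For $\tau = +1$ the leading factor $e^{\imi n \pi (1-\tau)/2}$ is $1$, and for $\tau = -1$ it becomes $(-1)^n$; using evenness of cosine, the two surviving terms match precisely the definition \eqref{eq:Rnk} of $2\K_n(s, r)$ (upon relabeling $\tau$ as $\sigma$ there). Substituting back yields \eqref{eq:Rn}.

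The main obstacle is simply the careful bookkeeping of the four $(\sigma,\tau)$ combinations of signs, phases and exponentials; the structural content is the $\arccos$-to-$\arcsin$ identity, which is what produces the symmetric $\arcsin(s/r) \pm \arcsin(s/\RR)$ pattern that defines $\K_n$.
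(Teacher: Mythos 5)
Your proposal is correct and takes essentially the same route as the paper's proof: take the $n$-th Fourier coefficient of the identity in Lemma~\ref{lem:vr}, insert Lemma~\ref{lem:Rn}, and collapse the double sign sum into the kernel $\K_n$. The only difference is bookkeeping: you use the combined identity $\arccos(\sigma x)=\pi/2-\sigma\arcsin(x)$ and collect phases $e^{\imi n\pi(1-\tau)/2}$, whereas the paper applies $\arccos(-x)=\pi-\arccos(x)$ and $\arccos(x)=\pi/2-\arcsin(x)$ in two steps while tracking factors $(-\imi)^n\sigma^n$.
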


\begin{proof}
By Lem.~\ref{lem:vr} we have
\begin{align*}
  	& e^{\mu\RR\cos(\psi)} \G_n(\psi)
	\\
	&=
	 \sum_{\sigma = \pm 1}
	e^{\imi n (\pi/2 -\sigma \psi)} (\To_{-\mu} \F)_n (\sigma \RR\sin(\psi))
	\\
	&=
	 \imi^n \sum_{\sigma = \pm 1}
	e^{- \imi n \sigma \psi} (\To_{-\mu} \F)_n (\sigma \RR\sin(\psi))
	\,.
\end{align*}
Setting $s \coloneqq \RR\sin(\psi)$ and using Lemma \ref{lem:Rn},
we obtain
  \begin{align*}
  	 (- \imi )^n &e^{\mu \RR \cos(\psi)} \G_n(\psi)
	\\
	&=
	\sum_{\sigma_1, \sigma_2 = \pm 1}
	\int_{s}^R \Bigl[ \; \frac{ r \,\F_n(r)  }{\sqrt{r^2-s^2}}
	e^{-\imi n  \sigma_1 \psi}
	\\
	& \hspace{0.1\textwidth}
	\times  e^{ \sigma_2 \mu \sqrt{r^2-s^2}} e^{-\imi n \sigma_2  \arccos(\sigma_1 s/r) } \Bigr] \, \dr
	\\
	&=
	\sum_{\sigma_1, \sigma_2 = \pm 1}
	\int_{s}^R  \Bigl[ \; \frac{ r \,\F_n(r)  }{\sqrt{r^2-s^2}}
	e^{-\imi n  \sigma_1 \psi} \sigma_1^n
	\\
	& \hspace{0.1\textwidth}
	\times   e^{\sigma_2 \mu \sqrt{r^2-s^2}} e^{-\sigma_1 \sigma_2 \imi n  \arccos( s/r) } \Bigr] \dr
		\\
	&=
	(-\imi)^n
	\sum_{\sigma_1, \sigma_2 = \pm 1}
	\int_{s}^R  \Bigl[ \; \frac{ r \,\F_n(r)  }{\sqrt{r^2-s^2}}
	e^{-\imi n  \sigma_1 \psi}
	\\
	& \hspace{0.1\textwidth}
	\times  \sigma_2^n \;  e^{\sigma_2 \mu \sqrt{r^2-s^2}} e^{\sigma_1\sigma_2 \imi n  \arcsin( s/r) } \Bigr]\,  \dr
	\\
 &=
	2 \, (-\imi)^n
	\sum_{\sigma_2 = \pm 1}
	\int_{s}^R \Bigl[ \; \frac{ r \,\F_n(r)  }{\sqrt{r^2-s^2}}
	\sigma_2^n  e^{\mu \sigma_2 \sqrt{r^2-s^2}}
 	\\
	& \hspace{0.1\textwidth}
	\times
	 \cos ( n   \psi -\sigma_2  n  \arcsin( \tfrac{s}{r}) )
	 \Bigr] \,  \dr
 \,.
\end{align*}
Here the second and third equalities  follow from the identities
$\arccos(-x) = \pi - \arccos(x)$,
$\arccos(x) = \pi/2 - \arcsin(x)$.
The last equality shows \eqref{eq:Rn}, \eqref{eq:Rnk}.
\end{proof}

For the following alternative relation between  $\F_n$ and $\G_n$
we make use of the Chebyshev polynomials of the first  kind,
\begin{equation*}
	T_k(z) \coloneqq
	\cos \kl{k \arccos(z)}   \quad \text{ for } \abs{z} \leq 1 \,.
\end{equation*}
We then have the following  result.

\begin{lemma}\label{lem:glk3}
Let  $\F \in C_0^\infty (\D)$.
For $n \in \Z$,  let $\F_{n}$, $\G_n$ denote the Fourier  coefficients of $\F$, $\Vo_\mu\F$,
and write
\begin{enumerate}[label=(\alph*)]
\item\label{it:T1}
 $\hat{\G}_{n}(t)\coloneqq   \tfrac{1}{2} e^{\mu \RR \sqrt{t} } \G_n( \arccos (\sqrt{t}))$;
\item \label{it:T2} $\hat{\F}_{n}(\rho)\coloneqq   \RR  \, \F_n  ( \RR \sqrt{1-\rho} \, )$;
\item \label{it:T3} $\hatK_n(t,\rho) \coloneqq  \tfrac{1}{2} \sum_{\sigma = \pm 1} \sigma^n
 e^{\sigma \mu\RR \sqrt{t-\rho}} \, T_n \bigl(\frac{\sqrt{t}\sqrt{t-\rho} + \sigma (1-t)}{\sqrt{1-\rho}}\bigr) $.
\end{enumerate}
Then $\hat{\F}_{n}$ and $\hat{\G}_{n}$ are related via:
\begin{align}\label{eq:V3}
	\forall t \in [0,1] \colon \quad
	\hat{\G}_{n}(t)=\int_{0}^{t} \hat{\F}_{n}(\rho)
	\frac{\hatK_n (t,\rho)}{\sqrt{t-\rho}}\dd \rho \,.
\end{align}
\end{lemma}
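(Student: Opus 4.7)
The plan is to derive \eqref{eq:V3} from the generalized Abel equation \eqref{eq:Rn} of Theorem~\ref{thm:Vn} by performing the two changes of variables implicit in definitions (a)--(c), namely $t := \cos^2(\psi)$ for the outer argument and $\rho := 1 - (r/R)^2$ for the radial integration variable, and then rewriting the angular factor of the kernel via the classical identity $T_n(\cos\theta) = \cos(n\theta)$.

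For the outer substitution, $\psi = \arccos(\sqrt{t})$ gives $\cos(\psi) = \sqrt{t}$ and $\sin(\psi) = \sqrt{1-t}$, so the prefactor $e^{-\mu R \cos(\psi)}$ in \eqref{eq:Rn} becomes $e^{-\mu R\sqrt{t}}$ and the lower integration limit becomes $R\sqrt{1-t}$. For the inner substitution $r = R\sqrt{1-\rho}$ I compute $r\,dr = -\tfrac{R^2}{2}\,d\rho$ (with the sign absorbed by the reversal of the integration interval) and $\sqrt{r^2 - R^2\sin^2(\psi)} = R\sqrt{t-\rho}$, so the new limits are $\rho \in [0,t]$. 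Using the identification $R\F_n(R\sqrt{1-\rho}) = \hat{\F}_n(\rho)$ from (b), the right-hand side of \eqref{eq:Rn} becomes
\[
2\, e^{-\mu R \sqrt{t}} \int_0^t \hat{\F}_n(\rho)\, \frac{\K_n(R\sqrt{1-t},\, R\sqrt{1-\rho})}{2\sqrt{t-\rho}}\, \dd\rho .
\]
Multiplying both sides by $\tfrac{1}{2} e^{\mu R \sqrt{t}}$ and invoking (a), the remaining task is to verify $\tfrac{1}{2}\K_n(R\sqrt{1-t},\, R\sqrt{1-\rho}) = \hatK_n(t,\rho)$.

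The key step is this kernel identification. The exponential factor in $\K_n$ is immediately $e^{\sigma\mu R\sqrt{t-\rho}}$, matching (c). For the angular factor I set $\alpha := \arcsin(\sqrt{(1-t)/(1-\rho)})$ and $\beta := \arcsin(\sqrt{1-t})$, so that $\cos\alpha = \sqrt{(t-\rho)/(1-\rho)}$, $\sin\alpha = \sqrt{(1-t)/(1-\rho)}$, $\cos\beta = \sqrt{t}$, and $\sin\beta = \sqrt{1-t}$. The angle-difference formula then yields
\[
\cos(\alpha - \sigma\beta) = \cos\alpha\cos\beta + \sigma\sin\alpha\sin\beta = \frac{\sqrt{t}\sqrt{t-\rho} + \sigma(1-t)}{\sqrt{1-\rho}},
\]
and because $T_n(\cos\theta) = \cos(n\theta)$ for every integer $n$ and every $\theta \in \R$, the factor $\cos(n(\arcsin(s/r) - \sigma\arcsin(s/R)))$ appearing in $\K_n$ is exactly the Chebyshev polynomial in (c). The principal obstacle is recognizing the algebraic expression $(\sqrt{t}\sqrt{t-\rho} + \sigma(1-t))/\sqrt{1-\rho}$ inside $T_n$ as $\cos(\alpha - \sigma\beta)$; once this trigonometric identification is in place, the remainder is routine bookkeeping of constants, signs, and integration limits.
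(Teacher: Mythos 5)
Your proposal is correct and follows essentially the same route as the paper's proof: both rest on the substitutions $\cos\psi=\sqrt{t}$, $r=R\sqrt{1-\rho}$ applied to \eqref{eq:Rn} together with the identity $T_n(\cos\theta)=\cos(n\theta)$ and the angle-difference formula to recognize the kernel $\K_n(R\sqrt{1-t},R\sqrt{1-\rho})$ as $2\hatK_n(t,\rho)$ (the paper merely performs the trigonometric rewriting of $\K_n$ before the change of variables rather than after). The bookkeeping of the factor $r\,\dd r=-\tfrac{R^2}{2}\dd\rho$, the limits, and the prefactors in your computation all check out.
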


\begin{proof}
By using the Chebyshev polynomials and using the trigonometric  sum
and difference identities we obtain
\begin{align*}
&\K_n(s, r) \\
& = \sum_{\sigma =\pm1}  \sigma^n e^{\sigma\mu \sqrt{r^2-s^2}} T_n \kl{  \cos( \arcsin (\tfrac{s}{r}) - \sigma \arcsin(\tfrac{s}{\RR}))} \\
& = \sum_{\sigma =\pm1}  \sigma^n e^{\sigma\mu \sqrt{r^2-s^2}} \,
T_n \bigl(  \cos( \arcsin (\tfrac{s}{r}))\cos( \arcsin (\tfrac{s}{\RR}))  \\
& \hspace{0.16\textwidth}     + \sigma \sin( \arcsin (\tfrac{s}{r})) \sin( \arcsin (\tfrac{s}{\RR})) \bigr)
\\
& = \sum_{\sigma =\pm1}  \sigma^n e^{\sigma\mu \sqrt{r^2-s^2}} \, T_n \Bigl(  \sqrt{1-\tfrac{s^2}{r^2} }    \sqrt{1-\tfrac{s^2}{\RR^2}}  + \sigma \tfrac{s}{r} \tfrac{s}{\RR} \Bigr)
\\
& = \sum_{\sigma =\pm1}  \sigma^n e^{\sigma\mu \sqrt{r^2-s^2}} \, T_n \Bigl(  \frac{\sqrt{r^2-s^2 }   \sqrt{\RR^2-s^2}  + \sigma s^2}{r \RR} \Bigr)  \,.
\end{align*}
Inserting the latter expression in \eqref{eq:Rn},
making the substitution $r^2 \gets R^2 - R^2 \rho$  and using~\ref{it:T1}
yields
 \begin{align*}
  &\hat{\G}_{n}(t)
  \\
  &= \tfrac{1}{2} \,  e^{\mu\RR \sqrt{t} } \G_n( \arccos (\sqrt{t}))
    \\
    &=
      \int_{\RR\sqrt{1-t} }^\RR
     \sum_{\sigma =\pm1}  \sigma^n
      \frac{ e^{\sigma\mu \sqrt{r^2-\RR^2+\RR^2 t }} }{\sqrt{r^2-\RR^2+\RR^2t}}
     \\
     & \hspace{0.01\textwidth} \times  T_n \Bigl(  \frac{\sqrt{r^2-\RR^2+\RR^2t} \,   \sqrt{\RR^2 t}  + \sigma \RR^2 (1-t)}{r \RR} \Bigr)
   \F_n(r)  r  \dr
    \\
    &=
      \tfrac{1}{2} \RR \int_{0}^t  \F_n(\RR \sqrt{1- \rho})
     \sum_{\sigma =\pm1}  \sigma^n e^{\sigma\RR \mu \sqrt{t-\rho}}
     \\
     & \qquad \times  T_n \Bigl(  \frac{ \sqrt{t-\rho }   \sqrt{t}  + \sigma (1-t)}{\sqrt{1-\rho}} \Bigr)
 \frac{  \rmd \rho }{\sqrt{t-\rho}}   \,.
 \end{align*}
With the definitions \ref{it:T2}, \ref{it:T3}
this gives~\eqref{eq:V3}.
\end{proof}

\section{Uniqueness of reconstruction}
\label{sec:uni}

The integral equation~\eqref{eq:V3} is of generalized  Abel type.
On the diagonal, the kernel $\hatK_n$ takes the form
\begin{equation}\label{eq:kn}
k_n(t) \coloneqq  \hatK_n(t,t) =  T_n (\sqrt{1-t})\,.
\end{equation}
Since the Chebyshev polynomials have zeros  in  $[0,1]$, the same holds for the function $t \mapsto  k_n(t)$.
Consequently, standard theorems on well-posedness  do not apply to~\eqref{eq:V3},
because such results require a non-vanishing diagonal.

\subsection{General uniqueness  result}

In order to show solution  uniqueness of~\eqref{eq:V3},   we use the following result that has recently been obtained in~\cite{schiefeneder2016radon}.

\begin{lemma}[Uniqueness\label{lem:abel} of generalized Abel equations with zeros on the diagonal,~\cite{schiefeneder2016radon}]
Let the kernel $\hatK \colon \domm \to \R$, with $\domm \coloneqq \set{(t,\rho) \in [0,1]^2 \mid 0 \leq \rho \leq t\leq 1}$,
satisfy the following:
\begin{enumerate}[label=(K\arabic*),leftmargin=3em]
	\item\label{lem:abel-1} $\hatK\in C^3(\domm)$.
	\item\label{lem:abel-2}  The set of zeros $N (\hatK) \coloneqq  \sset{ t \in [0,1) \mid \hatK(t,t)=0}$
	is finite and consists of simple zeros of $\hatK(t,t)$.
	\item\label{lem:abel-3} For every $t \in N (\hatK)$, $(\beta_1,\beta_2)
	\coloneqq \nabla \hatK (t,t)$  satisfies
	\begin{equation}\label{eq:ineq:Abel}
		1+\frac{1}{2}\,\frac{\beta_1}{\beta_1+\beta_2}>0 \,.
	\end{equation}
\end{enumerate}
Then, for any $\hat{\G} \in C([0,1])$, the generalized Abel  equation
\begin{equation}\label{eq:Abel}
	\forall t \in [0,1] \colon \quad
	\hat{\G}(t) = \int_0^t \frac{\hatK(t,\rho)}{\sqrt{t-\rho}} \, \hat{\F}(\rho)\dd \rho
\end{equation}
has at most one solution $\F \in C([0,1])$.
\end{lemma}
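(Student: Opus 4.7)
The plan is to reduce by linearity to the homogeneous statement ``if $\hat{\G}\equiv 0$ and $\hat{\F}\in C([0,1])$ satisfies~\eqref{eq:Abel}, then $\hat{\F}\equiv 0$'' and to propagate $\hat{\F}\equiv 0$ from the left endpoint of $[0,1]$ rightwards, exploiting the finiteness of the zero set from~\ref{lem:abel-2}. Label the zeros of $k(t)\coloneqq\hatK(t,t)$ as $t_1<\dots<t_M$. On the partition $[0,t_1],[t_1,t_2],\dots,[t_M,1]$ I would distinguish two fundamentally different regimes: the non-degenerate portion on which $k$ stays bounded away from zero, and a small right-neighbourhood of each zero $t_j$.

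In the non-degenerate regime, assume $\hat{\F}\equiv 0$ on $[0,a]$ and $k$ has no zero on $[a,b]$. Then~\eqref{eq:Abel} restricts to $0=\int_a^t\hatK(t,\rho)(t-\rho)^{-1/2}\hat{\F}(\rho)\dd\rho$ for $t\in[a,b]$. Dividing by $k(t)$, which is non-vanishing and of class $C^3$ by~\ref{lem:abel-1}, and writing $\hatK(t,\rho)/k(t)=1+(t-\rho)\tilde{K}(t,\rho)$ with continuous $\tilde{K}$, I would compose with the classical Abel inversion operator $\tfrac{1}{\pi}\tfrac{\dd}{\dd t}\int_a^t(t-s)^{-1/2}(\edot)\,\dd s$ to turn the first-kind equation into a Volterra equation of the second kind with continuous kernel; Gronwall's inequality then forces $\hat{\F}\equiv 0$ on $[a,b]$.

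The main obstacle is passing through a zero. Suppose $\hat{\F}\equiv 0$ on $[0,t_*]$, set $\phi(\tau)\coloneqq\hat{\F}(t_*+\tau)$, and Taylor-expand $\hatK(t_*+\tau,t_*+r)=\beta_1\tau+\beta_2 r+O((\tau+r)^2)$. The homogeneous equation becomes $0=\beta_1\tau\,I_1(\tau)+\beta_2\,I_2(\tau)+\text{(remainder)}$, where $I_j(\tau)\coloneqq\int_0^\tau r^{j-1}(\tau-r)^{-1/2}\phi(r)\dd r$. Using $I_2=\tau I_1-J$ with $J(\tau)\coloneqq\int_0^\tau\sqrt{\tau-r}\,\phi(r)\dd r$ and the identity $J'=\tfrac12 I_1$, the leading part collapses to the Euler-type ODE $(\beta_1+\beta_2)\tau I_1'+\tfrac12(2\beta_1+\beta_2)I_1=0$, whose solutions scale as $I_1(\tau)\propto\tau^{\alpha}$ with $\alpha=-\tfrac{2\beta_1+\beta_2}{2(\beta_1+\beta_2)}$. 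Via Abel inversion this corresponds to the self-similar profile $\phi(r)\propto r^{\gamma}$ with $\gamma=\alpha-\tfrac12=-\bigl(1+\tfrac12\tfrac{\beta_1}{\beta_1+\beta_2}\bigr)$. Assumption~\ref{lem:abel-3} is precisely the statement $\gamma<0$, so any non-trivial similarity solution blows up at $\tau=0$ and cannot be realised by a continuous $\phi$ satisfying $\phi(0)=\hat{\F}(t_*)=0$.

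To close the critical step I would finish with a bootstrap/Gronwall argument: the $C^3$-controlled Taylor remainder $R(\tau,r)=O((\tau+r)^2)$ contributes a source of order $\tau^{5/2}\snorm{\phi}_{L^\infty[0,\tau]}$, which is strictly subleading to the $\tau^{3/2}$-scaling of the leading part; combined with the sharp exponent $\gamma<0$ from~\ref{lem:abel-3}, this forces any continuous $\phi$ with $\phi(0)=0$ to vanish identically on some $[0,\epsilon]$. Finite iteration of the classical and the critical step across $t_1,\dots,t_M$ then transports $\hat{\F}\equiv 0$ over all of $[0,1]$, giving uniqueness. The genuinely hard point is the critical one: standard Abel inversion breaks down when the diagonal vanishes, and the sharp Mellin/self-similar spectrum captured by~\eqref{eq:ineq:Abel} is exactly what restores uniqueness.
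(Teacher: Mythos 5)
First, note that the paper does not prove this lemma at all: its ``proof'' is a citation of Thm.~3.4 in \cite{schiefeneder2016radon}, so what you are attempting is an independent proof of that external result. Your skeleton is the natural one and much of it is sound: reduction to the homogeneous equation, propagation of $\hat\F\equiv 0$ across intervals where $k(t)=\hatK(t,t)\neq 0$ by dividing by $k(t)$ and composing with Abel inversion to get a second-kind Volterra equation, and a local analysis at each zero $t_*$. Your exponent computation at a zero is also correct: with $(\beta_1,\beta_2)=\nabla\hatK(t_*,t_*)$ and $\beta_1+\beta_2=k'(t_*)\neq 0$ (guaranteed by \ref{lem:abel-2}), the self-similar profiles $\phi(r)\propto r^{\gamma}$ of the linearized equation have the unique characteristic exponent $\gamma=-\bigl(1+\tfrac12\,\beta_1/(\beta_1+\beta_2)\bigr)$, so \ref{lem:abel-3} is exactly $\gamma<0$; this correctly identifies why \eqref{eq:ineq:Abel} is the relevant condition.

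The genuine gap is the closing ``bootstrap/Gronwall'' step, which is precisely the hard part and does not work as stated. Near a diagonal zero the leading operator $\phi\mapsto(\beta_1+\beta_2)\,\tau I_1(\tau)-\beta_2 J(\tau)$ is scale invariant (after dividing by $\tau$ its kernel is homogeneous of degree $-1$, a cordial/Mellin-type Volterra operator), so the comparison ``remainder $=O(\tau^{5/2})\snorm{\phi}_{\infty,[0,\tau]}$ versus the $\tau^{3/2}$ scaling of the leading part'' proves nothing: for an arbitrary continuous $\phi$ with $\phi(0)=0$ the leading part admits no lower bound of order $\tau^{3/2}\snorm{\phi}$, and if you Abel-invert and divide by $\tau$ to set up a Gronwall inequality you obtain an estimate of the form $M(\tau)\le c\,M(\tau)$ with a fixed constant $c=c(\beta_1,\beta_2)$ that does not tend to zero as $\tau\to 0$, so no contraction and no conclusion. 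What is actually needed is injectivity of $I-W$ on $C[0,\epsilon]$, where $W$ is the limiting scale-invariant operator, and this follows from the fact that its Mellin/characteristic equation has no root with nonnegative real part (your computation shows the unique root is $\gamma<0$, and since the characteristic equation here is affine in the exponent, the real condition does suffice) — but passing from the location of the characteristic root to injectivity of the perturbed operator requires the spectral theory of such cordial Volterra operators, which is exactly the machinery behind Thm.~3.4 of \cite{schiefeneder2016radon} and is not supplied by a remainder estimate plus Gronwall. A further, smaller issue: \ref{lem:abel-2} allows $0\in N(\hatK)$, in which case the very first step is already critical and you may not assume $\phi(0)=\hat\F(0)=0$; your argument must then also exclude bounded, non-vanishing profiles (exponent $z=0$), which your stated reasoning, relying on $\phi(0)=0$, does not cover.
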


\begin{proof}
See \cite[Thm.~3.4]{schiefeneder2016radon}.
\end{proof}

\subsection{Uniqueness of the attenuated V-line transform}

We now apply Theorem \ref{lem:abel} to show uniqueness of a solution of
equation~\eqref{eq:V3}.

\begin{theorem}[Uniqueness \label{thm:uni}  of recovering $\F_n$]
Suppose $\mu \RR \leq 3/2$.
For any $\F \in C_0^\infty (\D)$ and $n\in \Z$, the Fourier coefficient $\F_n$  can be recovered as the unique solution of~\eqref{eq:Rn}.
\end{theorem}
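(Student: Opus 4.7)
The plan is to transfer the problem to the generalized Abel equation~\eqref{eq:V3} via Lemma~\ref{lem:glk3} and then apply the uniqueness result Lemma~\ref{lem:abel}. Because the changes of variables $\F_n \mapsto \hat{\F}_n$ and $\G_n \mapsto \hat{\G}_n$ from Lemma~\ref{lem:glk3} are invertible bijections, unique recoverability of $\hat{\F}_n$ from $\hat{\G}_n$ is equivalent to unique recoverability of $\F_n$ from $\G_n$ via~\eqref{eq:Rn}. So it suffices to verify conditions~(K1)--(K3) of Lemma~\ref{lem:abel} for the kernel $\hatK_n$.

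Conditions (K1) and (K2) are handled by symmetry considerations. For (K1), writing $a \coloneqq \sqrt{t-\rho}$ and $e^{\sigma\mu\RR a} = \cosh(\mu\RR a) + \sigma\sinh(\mu\RR a)$, and expanding $T_n$ as a polynomial of the same parity as $n$, the summation over $\sigma = \pm 1$ eliminates all odd powers of $a$; hence $\hatK_n$ depends on $a$ only through $a^2 = t - \rho$ and is therefore smooth on $\domm$. For (K2), equation~\eqref{eq:kn} shows that the zeros of $k_n(t) = T_n(\sqrt{1-t})$ on $[0,1)$ are $t_k = 1 - u_k^2$ with $u_k = \cos((2k-1)\pi/(2n)) \in (0,1)$; they are finite in number and, since $T_n'(u_k) \neq 0$ and $t \mapsto \sqrt{1-t}$ has nonvanishing derivative on $[0,1)$, they are simple zeros of $k_n$.

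The main step is (K3). Fix a zero $t_* \in [0,1)$ of $k_n$ and set $u_* \coloneqq \sqrt{1-t_*}$, so that $T_n(u_*) = 0$. Writing $\rho = t_* - \epsilon$ and Taylor expanding the argument of $T_n$ in $\hatK_n$ yields
\[
\frac{\sqrt{t_*(t_*-\rho)} + \sigma(1-t_*)}{\sqrt{1-\rho}} = \sigma u_* + \frac{\sqrt{t_*}}{u_*}\sqrt{\epsilon} - \frac{\sigma\epsilon}{2u_*} + O(\epsilon^{3/2}),
\]
together with $e^{\sigma\mu\RR\sqrt{\epsilon}} = 1 + \sigma\mu\RR\sqrt{\epsilon} + \tfrac{(\mu\RR)^2}{2}\epsilon + O(\epsilon^{3/2})$. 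After multiplying by $\sigma^n$, summing over $\sigma$, and using $T_n(\sigma u_*) = \sigma^n T_n(u_*) = 0$, the constant and $\sqrt{\epsilon}$ terms cancel, so the coefficient of $\epsilon$ provides $\partial_\rho \hatK_n(t_*,t_*)$. The Chebyshev ODE $(1-x^2)T_n''(x) - xT_n'(x) + n^2 T_n(x) = 0$ evaluated at $u_*$ gives $T_n''(u_*) = u_* T_n'(u_*)/t_*$, yielding a clean simplification. Combined with $\beta_1 + \beta_2 = \tfrac{d}{dt}T_n(\sqrt{1-t})\big|_{t_*} = -T_n'(u_*)/(2u_*)$, one obtains
\[
\frac{\beta_1}{\beta_1+\beta_2} = 1 - 2\mu\RR\sqrt{t_*}, \qquad 1 + \tfrac{1}{2}\frac{\beta_1}{\beta_1+\beta_2} = \tfrac{3}{2} - \mu\RR\sqrt{t_*}.
\]
Since $t_* < 1$ strictly, the assumption $\mu\RR \leq 3/2$ gives $\mu\RR\sqrt{t_*} < 3/2$, so condition~(K3) holds, and Lemma~\ref{lem:abel} yields uniqueness of $\hat{\F}_n \in C([0,1])$, hence of $\F_n$.

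The principal obstacle is the expansion leading to~(K3): the $\sqrt{\epsilon}$ term must cancel for $\nabla \hatK_n(t_*,t_*)$ to exist in the ordinary sense, and the cancellation hinges precisely on the fact that $t_*$ is a diagonal zero, i.e., on $T_n(u_*) = 0$. Matching this with the Chebyshev ODE then produces the clean expression $\tfrac{3}{2} - \mu\RR\sqrt{t_*}$, which in turn explains the threshold $\mu\RR \leq 3/2$ appearing in the hypothesis.
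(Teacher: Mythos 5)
Your proposal is correct and follows essentially the same route as the paper: reduce to the generalized Abel equation \eqref{eq:V3} via Lemma~\ref{lem:glk3}, verify \ref{lem:abel-1}--\ref{lem:abel-3} for $\hatK_n$ (even-power/parity argument for smoothness, simplicity of the zeros of $T_n(\sqrt{1-t})$, and the $\sqrt{\eps}$-expansion of $\hatK_n(t_*,t_*-\eps)$ combined with the Chebyshev ODE), arriving at $1+\tfrac12 \beta_1/(\beta_1+\beta_2)=\tfrac32-\mu\RR\sqrt{t_*}>0$ and then invoking Lemma~\ref{lem:abel}. Your intermediate values $\beta_1/(\beta_1+\beta_2)=1-2\mu\RR\sqrt{t_*}$ agree with the paper's final expression (the paper's displayed $\beta_1$ appears to carry a spurious factor of $2$, which cancels in the conclusion), so no gap remains.
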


\begin{proof}
Let $\F \in C_0^\infty (\D)$ vanish outside a ball of Radius $\RR^2 - \RR^2 a^2$ with $a < 1$.
According to Lem.~\ref{lem:glk3}, it is sufficient to show that   \eqref{eq:V3} has a unique
solution. To show that this is the case, we apply Lem.~\ref{lem:abel} by verifying  that
$\hatK_n $  satisfies \ref{lem:abel-1}-\ref{lem:abel-3}.
Clearly, $\hatK_n$ is smooth for $ t \neq \rho$. Further,  $\hatK_n$ can be written as  power series   only containing even powers of $\sqrt{st-\rho}$. Consequently,  $\hatK_n$ is also smooth on $\set{(t,\rho) \in \domm \mid t = \rho}$, which shows \ref{lem:abel-1}.  Next,  recall $ k_n(t)   \coloneqq \hatK_n(t,t)
= T_n(\sqrt{1-t})$.
As  $T_n$  has a finite number of isolated and simple roots this implies \ref{lem:abel-2}.

It remains to verify~\ref{lem:abel-3}. For that purpose, let
$t_0 \in [a, 1)$ be a zero of $k_n$ and set  $(\beta_1,\beta_2) \coloneqq \nabla K_n(t_0,t_0)$.
Then \begin{equation}\label{eq:beta12}
	\beta_1+\beta_2
	=
	k_n'(t_0)=
	- \frac{1}{2 \sqrt{1-t_0}}
	T_n'\left( \sqrt{1-t_0}\, \right) \,.
\end{equation}
Next we compute $\beta_1 = (\beta_1+\beta_2)- \beta_2$. For small  $\eps$,
\begin{align*}
&2 \hatK_n(t_0,t_0-\eps)
\\
&=  \sum_{\sigma = \pm 1} \sigma^n
e^{\sigma \mu\RR \sqrt{\eps}} \, T_n \left(\frac{\sqrt{t_0}\sqrt{\eps}+ \sigma (1-t_0)}{\sqrt{1-t_0+\eps}}\right)
\\&=
\sum_{\sigma = \pm 1} \sigma^n
\bkl{ 1 + \sigma \mu\RR  \sqrt{\eps}  }
\Bigl( \frac{\sqrt{t_0 }T_n'(\sigma \sqrt{1-t_0}) }{ \sqrt{1-t_0}}  \sqrt{\eps}
\\& \quad
+ \Big[ \frac{t_0 T_n''(\sigma \sqrt{1-t_0}) }{2(1-t_0)} -\frac{\sigma T_n'(\sigma \sqrt{1-t_0})}{2\sqrt{1-t_0}} \Big] \eps
\Bigr) + \mathcal O(\eps^2)
\\&=
 \frac{1}{ \sqrt{1-t_0}}  \Bigl(  (2 \mu\RR\sqrt{t_0} - 1)  T_n'(\sqrt{1-t_0})
 \\
 & \qquad  +  \frac{t_0 T_n''(\sqrt{1-t_0})}{ \sqrt{1-t_0}} \Bigr) \eps
+ \mathcal O(\eps^2)  \,.
\end{align*}
Here for the last equality we used $T_n'(-x)=(-1)^{n+1} T_n'(x)$ and $T_n''(-x)=(-1)^n T_n''(x)$.
Because $T_n$ is a solution of the differential equation
$(1-x^2)\,T_n''(x) -  \, x\, T_n'(x)+n^2\,T_n(x)=0$
and $t_0$ is a zero of $t \mapsto T_n(\sqrt{1-t})$,  it follows that
$    -\beta_2 =   \tfrac{ \mu\RR \sqrt{t_0}}{\sqrt{1-t_0}}\,
    T_n'(\sqrt{1-t_0})$.
Together with \eqref{eq:beta12} we obtain
\begin{equation}\label{eq:beta1}
\beta_1
=\frac{2 \mu\RR \sqrt{t_0} -1  }{\sqrt{1-t_0}\, }  \, T_n'(\sqrt{1-t_0}) \,.
\end{equation}
From \eqref{eq:beta12} and  \eqref{eq:beta1} we finally conclude
\begin{equation*}
1+ \frac{1}{2} \frac{\beta_1}{\beta_1 + \beta_2}
=
1- \frac{2 \mu\RR \sqrt{t_0} - 1}{2}
= \frac{3}{2} - \mu\RR \sqrt{t_0}  > 0 \,,
\end{equation*}
which is \ref{lem:abel-3}. Consequently,  Lem.~\ref{lem:abel} implies that
$\hat \F_n$ is the unique  solution of~\eqref{lem:glk3}.
\end{proof}

As a corollary of Thm.~\ref{thm:uni} we immediately obtain the
following uniqueness result for the attenuated V-line transform.

\begin{corollary}[Invertibility\label{cor:uni} of  $\Vo_\mu$]
Suppose $\mu\RR \leq 3/2$.
If $\F_1, \F_2   \in C_0^\infty (\D)$ satisfy
$\Vo_\mu \F_1 = \Vo_\mu \F_2$, then $\F_1 = \F_2$.
\end{corollary}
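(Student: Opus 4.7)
The plan is to deduce Corollary~\ref{cor:uni} directly from Thm.~\ref{thm:uni} by exploiting linearity of $\Vo_\mu$ and uniqueness of Fourier series expansions.

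First I would set $\F \coloneqq \F_1 - \F_2 \in C_0^\infty(\D)$. Since the map $\F \mapsto \Vo_\mu \F$ is linear in $\F$ (the integrals in~\eqref{eq:Vtrafo} are linear and the weight $e^{-\mu r}$ is independent of $\F$), the hypothesis $\Vo_\mu\F_1 = \Vo_\mu \F_2$ gives $\Vo_\mu \F = 0$. It therefore suffices to show that $\Vo_\mu \F = 0$ implies $\F = 0$.

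Next I would pass to Fourier coefficients with respect to the angular variables. Because $\Vo_\mu\F(\ph,\psi)=0$ for all $(\ph,\psi)$, the definition~\eqref{eq:gnn} immediately yields $\G_n(\psi)=0$ for every $n \in \Z$ and every $\psi \in (0,\pi/2)$. Plugging this into the generalized Abel equation~\eqref{eq:Rn} obtained in Thm.~\ref{thm:Vn}, the function $\F_n$ satisfies the homogeneous integral equation with right-hand side identically zero. Since $\F_n$ is continuous on $[0,R]$ and vanishes at $R$ (as $\F \in C_0^\infty(\D)$), Thm.~\ref{thm:uni} applies under the standing assumption $\mu R \leq 3/2$; by the uniqueness assertion of that theorem, $\F_n \equiv 0$ for every $n \in \Z$.

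Finally, I would invoke the Fourier expansion~\eqref{eq:fn}: for almost every $r$ and every $\ph$, $\F(r\spar(\ph)) = \sum_{n\in\Z} \F_n(r) e^{\imi n \ph} = 0$. Since $\F$ is smooth, this forces $\F\equiv 0$ on $\R^2$, whence $\F_1 = \F_2$. No step poses a real obstacle here: linearity and the uniqueness of Fourier coefficients are routine, and the substantive content (the injectivity of the Abel-type integral operator with zeros on the diagonal) has already been absorbed into Thm.~\ref{thm:uni}.
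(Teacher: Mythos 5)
Your proposal is correct and follows essentially the same route as the paper: reduce by linearity to showing $\Vo_\mu \F = 0$ implies $\F = 0$, pass to Fourier coefficients so that each $\F_n$ solves the homogeneous equation \eqref{eq:Rn}, and invoke the uniqueness assertion of Thm.~\ref{thm:uni} to conclude $\F_n \equiv 0$ for all $n$, hence $\F = 0$.
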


\begin{proof}
Let  $\F  \in C_0^\infty (\D)$ satisfy $(\Vo_\mu \F)_n = 0$ for all $n \in \Z$.
Thm.~\ref{thm:uni} shows that~\eqref{eq:Rn} has the
unique solution   $\F_n = 0$, which implies  $\F= 0$.
The linearity of $\Vo_\mu$ gives
the claim.
\end{proof}

For the case of vanishing attenuation, in  \cite{moon2016analytic} we derived an explicit  solution formula for~\eqref{eq:Rn}.
We have not been able to derive a similar  result $\mu \neq 0$; currently we don't know whether such a solution formula exists.
In the following section we show that~\eqref{eq:Rn} can be efficiently  solved numerically.

\section{Numerical reconstruction algorithm}
\label{sec:num}

In this section  we numerically implement the Fourier series  approach.
Suppose we have given discrete data
\begin{equation} \label{eq:data}
     \gn[\pp,\qq] \simeq  \Vo_\mu \F\kl{ \ph_\pp, \arcsin(s_\qq/\RR)}
      \quad
      \text{for }  (\pp,\qq) \in
      \set{0, \dots,  \PP-1} \times \set{0, \dots, \QQ} \,.
 \end{equation}
 Here $\ph_\pp \coloneqq   2\pi\pp/\PP$ and $s_\qq \coloneqq  \qq \RR / \QQ $  correspond to discrete vertex positions and  half opening angles.
 The goal is to estimate the values $\F(x_\ii)$ of the  emission distribution
 at grid points $x_\ii  =  (\ii_1, \ii_2)  \RR / \MM$
 for  $\ii=  (\ii_1 , \ii_2) \in \set{- \MM, \dots, \MM }^2$.

\subsection{Basic reconstruction strategy}

Thm.~\ref{thm:Vn} shows that $\F$ can be recovered   from $\Vo_\mu \F$
by implementing the following steps:
\begin{enumerate}[label=(S\arabic*),leftmargin=3em]
\item \label{alg:S1}
Evaluate $\G_n(\psi) \coloneqq \int_0^{2\pi} (\Vo_\mu\F)(\ph, \psi) e^{-\imi n \ph }\dd\ph$.
\item  \label{alg:S2}
Estimate  $\F_n$ by solving~\eqref{eq:Rn}.

\item \label{alg:S3}
Evaluate $\F( r \spar(\ph) ) = \sum_{n \in \Z}  \F_n(r) e^{\imi n \ph}$.

\item \label{alg:S4}
Resample $ \F$ to  Cartesian coordinates.
\end{enumerate}

As described in the following, in our numerical implementation we discretize any of these  steps.  For \ref{alg:S1} and~\ref{alg:S3}, we use the standard FFT algorithm.  For \ref{alg:S1}, the FFT algorithm outputs approximations to $\G_n$, which are used as inputs for \ref{alg:S2}. After implementing~\ref{alg:S3}, we have an approximation of $\F$ given on a polar grid.
For resampling  these values to a Cartesian grid, we use bilinear interpolation
in the polar coordinate space.

The main issue  in the reconstruction procedure consists in
solving the integral equation~\eqref{eq:Rn}. For that purpose we use the product integration
method with the mid-point rule~\cite{Linz,Plato12,Weiss71},
that is presented in the following subsection.

\subsection{The product integration  method for solving~\eqref{eq:Rn}}

Evaluating~\eqref{eq:Rn} at the discretization points $s_\qq$ yields
\begin{align*}
	\tilde \G_n(s_\qq)
& \coloneqq
\frac{e^{\mu \sqrt{\RR^2 - s_\qq^2}}}{2}
\G_n \bigl(\arcsin(\tfrac{s_\qq}{\RR}) \bigr)
	\\
&=
   \sum_{j=\qq}^{\QQ-1}
	\int_{s_j}^{s_{j+1}}
	\F_n(r)\,\frac{r  \K_n (s_\qq,r)}{\sqrt{\smash[b]{r^2-s_\qq^2}}} \, \dd r \,.
\end{align*}
 Approximating $\K_n(s_\qq,r) \simeq \K_n(s_\qq,r_j)$ on $r \in [s_j, s_{j+1}]$ where $r_j \coloneqq (j+1/2)\, \RR/\QQ$, we obtain
\begin{align*}
&\tilde \G_n(s_\qq)
	\simeq
	\sum_{j=\qq}^{\QQ-1} w_{\qq,j}
	\K_n \kl{s_\qq,r_j}  \F_n \kl{r_j }\,,
\\
&w_{\qq,j}  \coloneqq  \int_{s_j}^{s_{j+1}} \frac{r}{\sqrt{\smash[b]{r^2 - s_\qq^2}}} \dd r= \sqrt{\smash[b]{s_{j+1}^2  - s_\qq^2 }}-\sqrt{\smash[b]{s_j^2 - s_\qq^2}}\,.
\end{align*}
Set $w_{\qq,j} =0$ for $j \geq \qq$ and define
\begin{itemize}
\item discrete kernels $\Kn_n = (w_{\qq,j} \, \K_n(s_\qq,r_j))_{\qq,j =0,\dots,\QQ-1} $;
\item discrete data $\tilde \gn_n = (\tilde \G_n(s_0), \dots, \tilde \G_n(s_{\QQ-1}) )^\trans $;
\item discrete unknowns $\fn_n = (\fn_n[0], \dots, \fn_n[\QQ-1] )^\trans$.
\end{itemize}
The product integration method   then consists  in solving the following
 system of linear equations:
\begin{equation} \label{eq:Minv}
\text{Find} \quad \fn_n \in \R^\QQ
\quad \text{such that} \quad
\tilde \gn_n  = \Kn_n  \fn_n   \,.
\end{equation}
The matrix $\Kn_n$ is triangular.
If $\Kn_n$ is non-singular, then
\eqref{eq:Minv} can efficiently be solved by
forward substitution.

\subsection{Tikhonov regularization}

Because the  kernel function $\K_n$ has zeros in the diagonal,
the matrix $\Kn_n$  has diagonal entries being  close to zero. As a consequence, solving    \eqref{eq:Minv} is numerically unstable. In order to obtain stable solutions, regularization methods  have to be applied. We
apply  Tikhonov regularization \cite{EngHanNeu96,Gro84,Han98,scherzer2009variational,TikArs77} for that purpose, where regularized solutions $\fn_n^\la $ are  defined  as solutions of the regularized normal equation
\begin{equation} \label{eq:tik}
 \left(\Kn_n^\trans  \Kn_n  + \la_n \In_\QQ \right) \fn_n^\la
 =\Kn_n^\trans \tilde \gn_n \,.
\end{equation}
Here $\In_\QQ$ is the $\QQ \times \QQ$ identity matrix  and $\la = (\la_n)_n$
a vector of non-negative regularization parameters.

For a non-vanishing kernel diagonal, the  product integration  method \eqref{eq:Minv} is known to be convergent of order $3/2$; see \cite[Theorem 3.5]{Weiss71}.
Due to the zeros of the kernels, such results cannot be applied to the attenuated V-line  transform. We are not aware  of any results in that direction.  The numerical results indicate that
for suitable selection of the  regularization parameter, a convergence
analysis should be possible.

\subsection{Summary of reconstruction algorithm}

In summary, we  obtain the following
reconstruction algorithm for inverting the attenuated V-line transform.

\begin{framed}
\begin{alg}[Numerical inversion of the attenuated V-line transform]\mbox{}\\
\ul{Input}:\label{alg:Vnum} Data $\gn = (\gn[\pp,\qq])_{\pp,\qq} \in \R^{\PP \times (\QQ+1)}$; see \eqref{eq:data}.

\noindent \ul{Output}:  $\fn^\la \simeq (\F(x_\ii))_\ii \in \R^{(2\MM+1) \times (2\MM+1)}$.

\begin{enumerate}[label=(N\arabic*),leftmargin=2.5em]

\item \label{alg:N1}
Apply the FFT: $(\gn_n)_{n} := \operatorname{FFT} (\gn)$ .

\item  \label{alg:N2}
For any $n = -\PP/2, \dots , \PP/2-1$ do:
\begin{itemize}[leftmargin=0em]
\item Set $\tilde \G_n(s_\qq) \coloneqq
\tfrac{1}{2} \exp(\mu \sqrt{\smash[b]{\RR^2 - s_\qq^2}}\, )
\G_n(\arcsin(\tfrac{s_\qq}{\RR}))$;
\item Choose regularization parameters $\la_n > 0$;
\item Compute $\fn_n^\la$ by solving \eqref{eq:tik}.
\end{itemize}

\item \label{alg:N3}
Apply the inverse FFT: $\fn^\la_{\rm POL} := \operatorname{IFFT} ((\fn_n^\la)_n )$ .

\item \label{alg:N4}
Resample  $\fn^\la_{\rm POL}$ to a Cartesian grid.
\end{enumerate}
\end{alg}
\end{framed}

Steps \ref{alg:N1} and~\ref{alg:N3} in Alg.~\ref{alg:Vnum} consist of $\QQ$  one-dimensional FFTs and therefore require $\mathcal{O}(\QQ\PP\log \PP)$ floating point operations (FLOPS).   Using bilinear interpolation \ref{alg:N4} requires $\mathcal{O}(\MM^2)$ FLOPS. The most time consuming part is  \ref{alg:N2} which consists
of solving the $\PP$ linear equations \eqref{eq:tik},  each with $\QQ$ unknowns.
Using  the Cholesky decomposition these equations are solved  with approximately  $\QQ^3/6$ FLOPS. Supposing $\PP, \QQ , \MM = \mathcal O( \NN^{1/2})$, where $\NN$ is the total number of  unknowns,  the numerical effort of Alg.~\ref{alg:Vnum} therefore  is
$\mathcal O(\NN^2)$ with a small leading constant.
 On a standard PC, our algorithm requires about $1/40$ seconds for recovering about  $\NN
 = 40\, 000$  unknowns.

\begin{figure}[tb!]
\centering
\includegraphics[width =\textwidth]{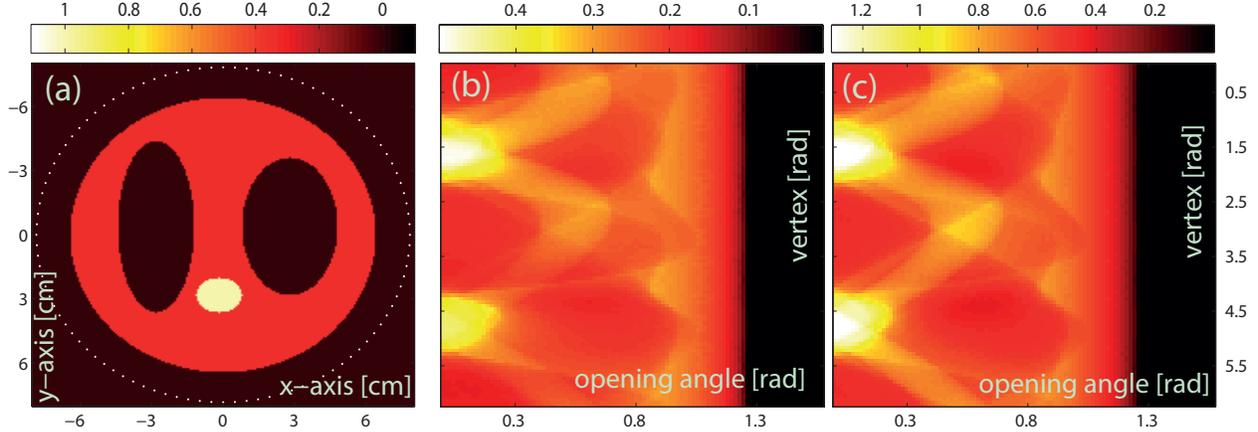}
\caption{{\scshape Phantom and data}. (a) Emission distribution (phantom) evaluated at a uniform $(2\MM+1) \times (2\MM+1)$ grid used for simulation studies. (b) Attenuated V-line transform with $\mu = \unit[0.15]{/cm}$ (c) Un-attenuated V-line transform.\label{fig:phant}}
\end{figure}

\section{Numerical results}
\label{sec:results}

For  the following numerical results we use the true emission distribution (phantom)
$\fn^\star$ shown in  Fig.~\ref{fig:phant}(a). It is contained in the disc of radius $\RR = \unit[8]{cm}$ and represented by discrete values on an $(2\MM+1) \times (2\MM+1)$ grid with $\MM \coloneqq 100$.
Attenuated V-line data are  simulated for $\PP = 100$ vertex positions  indicated by white  dots in Fig.~\ref{fig:phant}(a). At every vertex position we evaluate the attenuated V-line transform for $\QQ+1$ half opening angles with $\QQ \coloneqq 100$. The attenuation coefficient is taken as
$\mu =  \unit[0.15]{/cm}$.

\begin{figure}[tb!]
\centering
\includegraphics[width =0.9\textwidth]{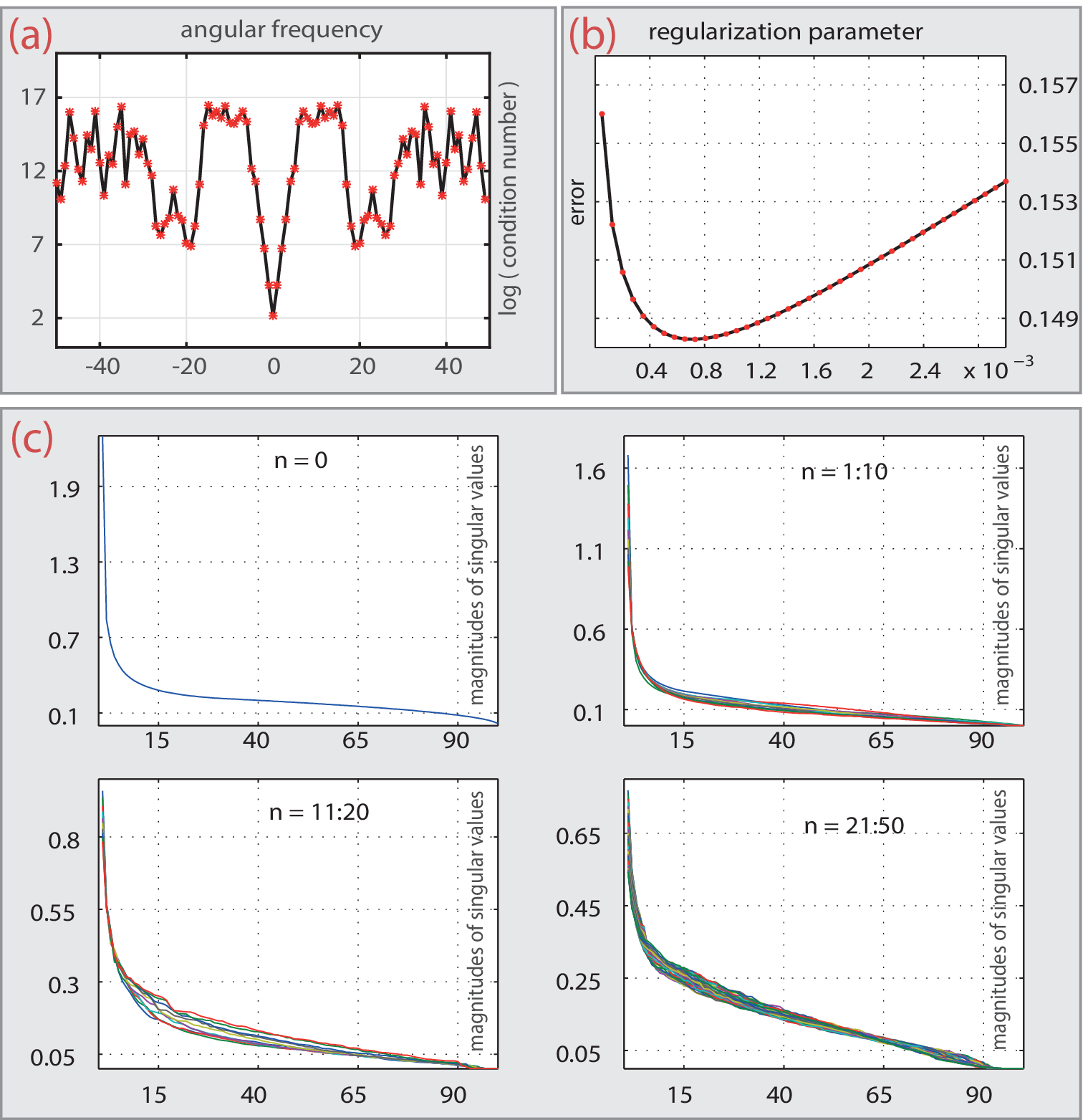}
\caption{{\scshape Investigation of stability}.  (a) Condition numbers of $\Kn_n$. (b) Relative $\ell^2$-reconstruction error as a function of the regularization parameter. (c) Singular values of $\Kn_n$ for $n=0$ (top left), $n =1, \dots, 10$ (top right), $n =11, \dots, 20$ (bottom left), and $n =21, \dots, 50$ (bottom right). \label{fig:matrix}}
\end{figure}

\subsection{Data computation}

For numerically computing the attenuated V-line transform, each of the
two branches of the V-line with vertex $\RR\spar(\ph_\pp)$  and half opening angle $\psi_\qq$ is sampled at  $2\MM+1$  equidistant discretization points in the interval $[0, 2\RR]$. The approximate function values $\F_{\rm BL}(\RR\spar(\ph_\pp)- j \tfrac{2\RR}{\MM}   \spar(\ph_\pp - \sigma\psi_\qq)  ) $ are computed by bilinear interpolation. Given these  approximate function values, we find
\begin{equation}\label{eq:Vf-num}
 g[\pp, \qq] \coloneqq \frac{2\RR}{\MM} \sum_{\sigma = \pm 1} \sum_{j=0}^{\MM}  \F_{\rm BL}(\RR\spar(\ph_\pp)
- \tfrac{2\RR}{\MM} j  \spar(\ph_\pp - \sigma\psi_\qq)  )  \, e^{- 2 \mu  j \RR / \MM  }
\end{equation}
as an approximation to $\Vo_\mu \F(\ph_\pp, \psi_\qq)$.

The numerically computed  attenuated V-line data corresponding to the phantom of  Fig.~\ref{fig:phant}(a) are shown in Fig.~\ref{fig:phant}(b). For comparison purpose Fig.~\ref{fig:phant}(c) shows the  V-line transform of the same phantom computed with attenuation value zero. One clearly notes  two effects of  attenuation: First, compared to unattenuated data, the overall intensity of the data is reduced. Second, and more importantly, the attenuation effects change  non-uniformly  over the data domain, which makes attenuation correction a non-trivial issue.

\begin{figure}[tb!]
\centering
\includegraphics[width =0.5\textwidth]{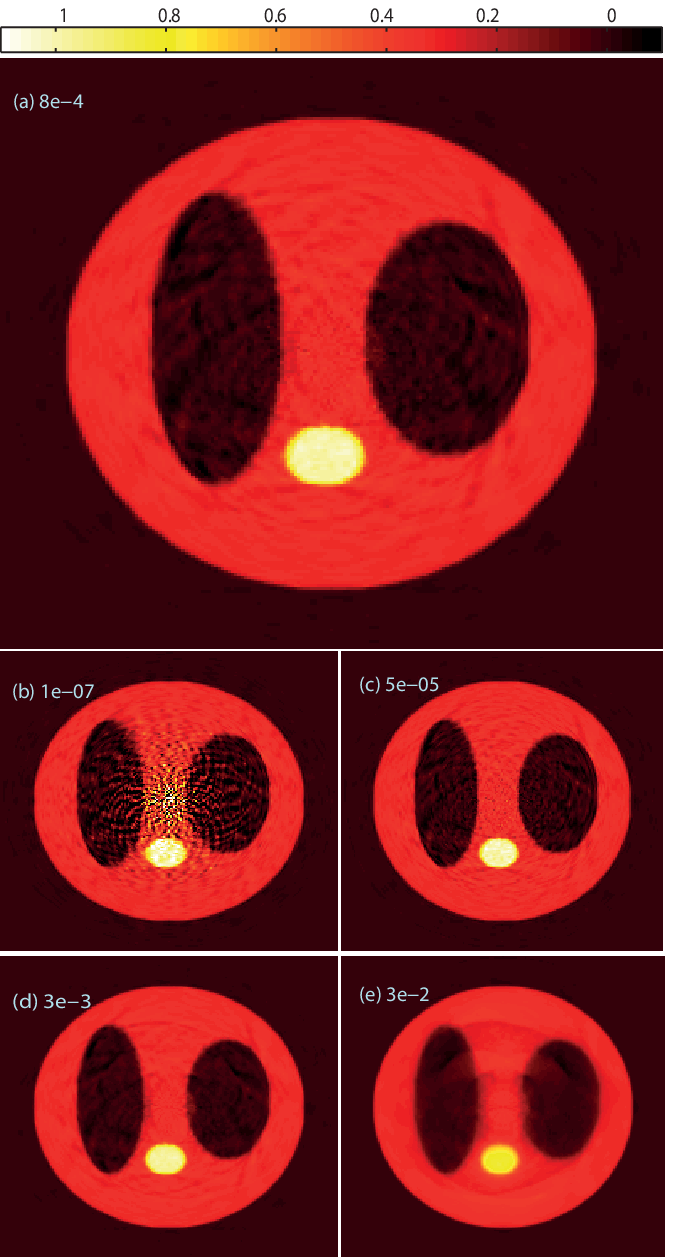}
\caption{{\scshape  Reconstructions from simulated data.} The regularization parameter has been chosen close to optimal value in (a), very small in (b), small in (c), large in (d) and very large in (e).\label{fig:recon}}
\end{figure}

\subsection{Reconstruction results for simulated data}

Next we present results of  Alg.~\ref{alg:Vnum} applied to  the
data shown in Fig.~\ref{fig:phant}.
The first issue  that has to be addressed is the selection of the regularization  parameter.
For that purpose, Fig.~\ref{fig:matrix}(a) displays
the condition numbers $\kappa(\Kn_n) \coloneqq  \snorm{\Kn_n}\, \snorm{\Kn_n^{-1}}$, which are a measure for the instability of solving \eqref{eq:Minv}. Except for $n=0$, the condition numbers   are large. Hence we
stabilize  any of the equations \eqref{eq:Minv} except the one for  $n = 0$.
To get more  inside in the instability of~\eqref{eq:Minv}, Fig.~\ref{fig:matrix}(c)  shows the singular values of $\Kn_n$.
For any of the matrices  with $n \neq 0$,
one observes a quite similar behavior. Therefore we use
a constant  positive regularization parameter  for $n\neq 0$.
Such a choice turns out to perform well in our  numerical studies.
Selecting  the  scalar regularization parameter still  is a non-trivial issue.
In the current studies we  have chosen it empirically by testing different values.
Data driven strategies such as  the discrepancy principle or the L-curve method \cite{EngHanNeu96,Han98}  will be investigated  in future studies.

For Fig.~\ref{fig:matrix}(b)  we compute the relative  $\ell^2$-reconstruction errors  $\snorm{\fn^\star- \fn^\la}_2/\snorm{\fn^\star}_2$
for different values of the regularization parameter $\la$.
One observes the typical semi-convergence behavior expected for ill-posed problems:  Starting with a large regularization parameter $\lambda$, the  error first decreases with decreasing $\la$ up to an optimal $\la^\star$. A further decrease of $\la$ increases the error due to  over-fitting of the data. In the  present case,
the  optimal regularization parameter turns out to be $\la^\star  \simeq 0.0008$.  The corresponding  reconstruction result is shown in Fig.~\ref{fig:recon}(a).

\begin{figure}[tb!]
\centering
\includegraphics[width =0.5\textwidth]{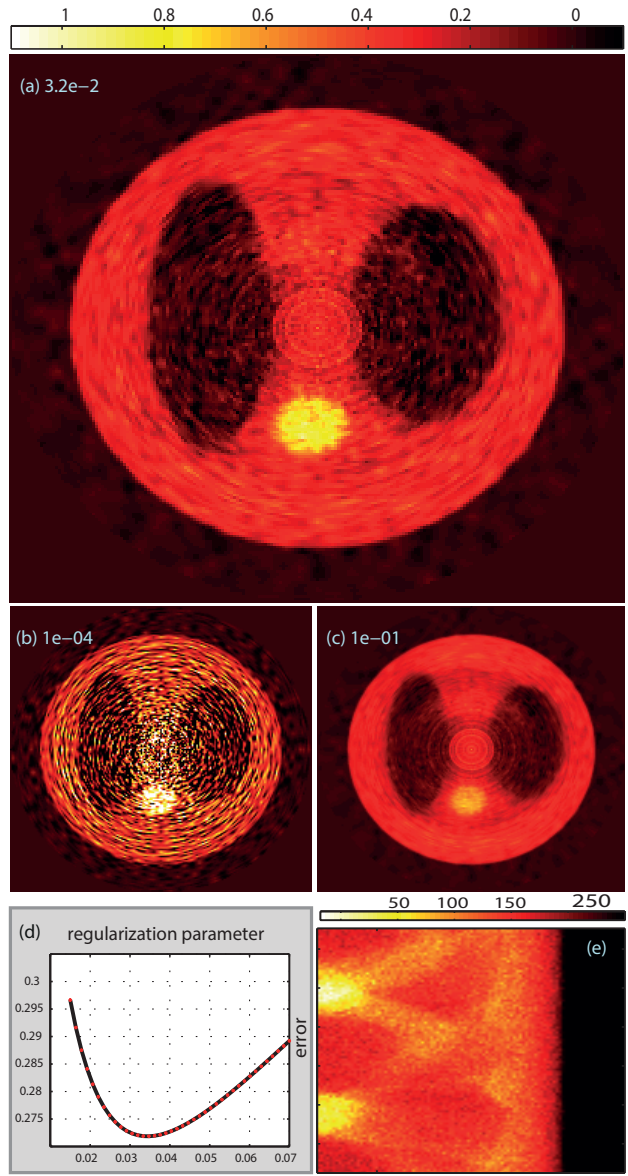}
\caption{{\scshape Noisy data simulations}.   (a) Reconstruction using optimal
$\la$.\label{fig:noisy}
(b) Reconstruction results using to small $\la$.
(c) Reconstruction results using to large  $\la$.
(d) Relative $\ell^2$ reconstruction error for  various $\la$
(e) Photon limited noisy data.}
\end{figure}

Fig.~\ref{fig:recon}(c) and Fig.~\ref{fig:recon}(d)  show reconstructions
for not optimally selected regularization parameters. The reconstruction results are still  good which demonstrates  the stability of  our algorithm  with respect to the choice of the regularization parameter.   In particular, a wide range  of parameters can be used to obtain accurate reconstructions. However,   $\la$ cannot be chosen  arbitrary far  away from the optimal value: Fig.~\ref{fig:recon}(b) corresponds to a small regularization parameter  far away from the optimal value.  High frequency error is evident.
Fig.~\ref{fig:recon}(e) shows results for a much too large  regularization parameter resulting  in a blurred reconstruction.

\subsection{Reconstruction results for photon limited data}

An extremely important  feature of any image reconstruction algorithm is its
ability  to deal with noisy data. In order to investigate this issue,  we performed simulations for limited number of photon counts. For the data shown in Fig.~\ref{fig:noisy}(e) we use a total number of $1\,894\,918$ photon counts and a maximal number of 573 photon counts on a single V-line.  The reconstruction results for different regularization  parameters   are shown in Figs.~\ref{fig:noisy}(a)-(c): In Fig.~\ref{fig:noisy}(a),
the regularization  parameter is chosen close to the optimal value, while in Fig.~\ref{fig:noisy}(b) it is chosen much too small, and in Fig.~\ref{fig:noisy}(c) it is chosen much too large. The same qualitative behavior as for simulated data can be observed.  However, as expected, the optimal regularization parameter is much larger and the reconstruction results worse than in the simulated data case.
The dependence on the regularization parameter has been investigated by computing the relative  $\ell^2$-error shown in Fig.~\ref{fig:noisy}(d).

\begin{figure}[tb!]
\centering
\includegraphics[width =\textwidth]{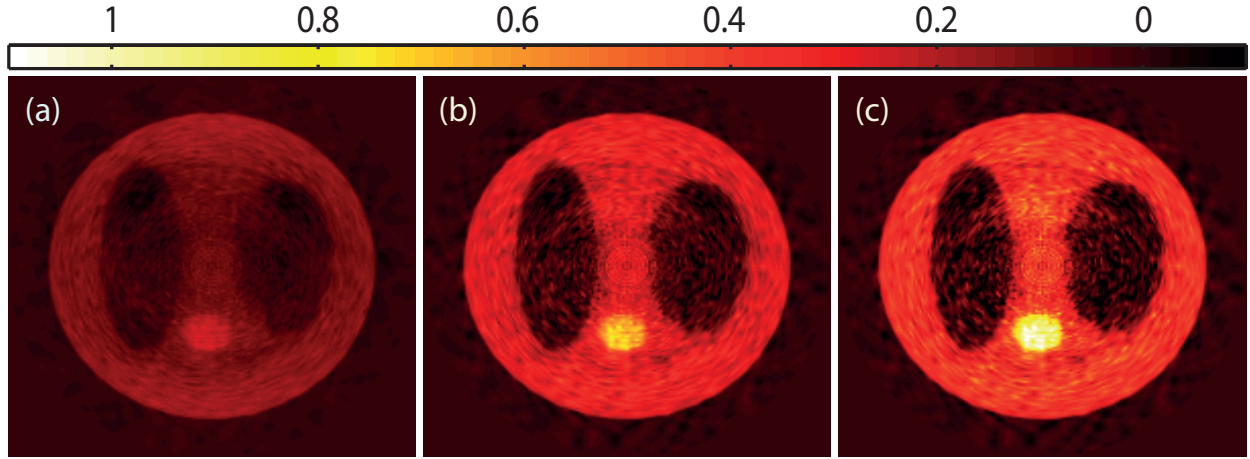}
\caption{{\scshape Stability with respect to correct attenuation value}.
(a) Reconstruction  from data shown in Fig.~\ref{fig:noisy} using Alg.~\ref{alg:Vnum} with $\la =0.03$ and assuming  vanishing   attenuation. (b) Same for under-estimated attenuation $\mu = \unit[0.125]{/cm}$.
(c) Same for  over-estimated  attenuation $\mu = \unit[0.175]{/cm}$. The  data is shown in Fig. \ref{fig:noisy} and has true attenuation value $\mu = \unit[0.15]{/cm}$.
\label{fig:stable}}
\end{figure}

In practical applications the attenuation value $\mu$ may not be known exactly.
We therefore apply Alg.~\ref{alg:Vnum} with  attenuation values that are different from the true attenuation. Results are shown in Fig.~\ref{fig:stable} where we used the same noisy data as above and a regularization parameter of $\la =0.03$.
One notices that the reconstruction results are quite stable with respect to the correct attenuation value. On the other hand, the poor reconstruction quality assuming vanishing attenuation shown in Fig.~\ref{fig:stable}(a) clearly demonstrates  that ignoring attenuation produces unacceptable results.

\section{Conclusion}
\label{sec:conclusion}

In this paper we  established a Fourier series approach for inverting the
attenuated V-line transform arising in SPECT with Compton  cameras.
We have been able to show invertibility of the attenuated V-line transform and to derive
an efficient reconstruction algorithm.

It is an interesting line of future work to generalize our results  in various directions.
For example, similar inversion approaches may be derived for various
geometries in two dimensions (attenuated V-line transforms) and three dimensions (attenuated conical Radon transform).
Further interesting extensions consider  the case of non-orthogonal
axis and non-constant attenuation. Finally, comparing our approach with iterative methods and testing on real data are important future aspects.

\section*{Acknowledgment}

The work of S. Moon has been supported by the National Research Foundation of Korea grant funded by the Korea government (MSIP) (2015R1C1A1A01051674) and the TJ Park
Science Fellowship of POSCO TJ Park Foundation.
S. Moon thanks the University of Innsbruck for  hospitality during his visit, when parts of this  work have been carried out.

%\bibliographystyle{plain}
%\bibliography{Vatten}

\begin{thebibliography}{10}

\bibitem{Allmaras13}
M.~Allmaras, D.~Darrow, Y.~Hristova, G.~Kanschat, and P.~Kuchment.
\newblock Detecting small low emission radiating sources.
\newblock {\em Inverse Probl. Imaging}, 7(1):47--79, 2013.

\bibitem{Ambarsoumian10}
G.~Ambartsoumian, R.~Gouia-Zarrad, and M.~A. Lewis.
\newblock Inversion of the circular {R}adon transform on an annulus.
\newblock {\em Inverse Probl.}, 26(10):105015, 11, 2010.

\bibitem{AmbMoo13}
G.~Ambartsoumian and S.~Moon.
\newblock A series formula for inversion of the {V}-line {R}adon transform in a
  disc.
\newblock {\em Comput. Math. Appl.}, 66(9):1567--1572, 2013.

\bibitem{AmbRoy16}
G.~Ambartsoumian and S.~Roy.
\newblock Numerical inversion of a broken ray transform arising in single
  scattering optical tomography.
\newblock {\em IEEE Trans. Comput. Imaging}, 2(2):166--173, 2016.

\bibitem{anger1958scintillation}
H.~O. Anger.
\newblock Scintillation camera.
\newblock {\em Rev. Sci. Instr.}, 29(1):27--33, 1958.

\bibitem{BasZenGul97}
R.~Basko, G.~L. Zeng, and G.~T. Gullberg.
\newblock Analytical reconstruction formula for one-dimensional {C}ompton
  camera.
\newblock {\em {IEEE} Trans. Nucl. Sci.}, 44(3):1342--1346, 1997.

\bibitem{BasZenGul98}
R.~Basko, G.~L. Zeng, and G.~T. Gullberg.
\newblock Application of spherical harmonics to image reconstruction for the
  {C}ompton camera.
\newblock {\em Phys. Med. Biol.}, 43(4):887, 1998.

\bibitem{basko1997fully}
R.~Basko, G.L. Zeng, and G.T. Gullberg.
\newblock Fully three dimensional image reconstruction from
  “{V}”-projections acquired by {C}ompton camera with three vertex
  electronic collimation.
\newblock In {\em Nuclear Science Symposium, 1997. IEEE}, volume~2, pages
  1077--1081, 1997.

\bibitem{bellini79compensation}
S.~Bellini, M.~Piacentini, C.~Cafforio, and F.~Rocca.
\newblock Compensation of tissue absorption in emission tomography.
\newblock {\em {IEEE} Trans. Acoust., Speech, Signal Processing},
  27(3):213--218, 1979.

\bibitem{Cor63}
A.~M. Cormack.
\newblock Representation of a function by its line integrals, with some
  radiological applications.
\newblock {\em J. Appl. Phys.}, 34(9):2722--2727, 1963.

\bibitem{CreBon94}
M.~J. Cree and P.~J. Bones.
\newblock Towards direct reconstruction from a gamma camera based on {C}ompton
  scattering.
\newblock {\em IEEE Trans. Med. Imaging}, 13(2):398--407, 1994.

\bibitem{EngHanNeu96}
H.~W. Engl, M.~Hanke, and A.~Neubauer.
\newblock {\em Regularization of inverse problems}, volume 375 of {\em
  Mathematics and its Applications}.
\newblock Kluwer Academic Publishers Group, Dordrecht, 1996.

\bibitem{EveFleTidNig77}
D.~B. Everett, J.~S. Fleming, R.~W. Todd, and J.~M. Nightingale.
\newblock Gamma-radiation imaging system based on the {C}ompton effect.
\newblock {\em Proc. {IEEE}}, 124(11):995--1000, 1977.

\bibitem{GouAmb13}
Rim Gouia-Zarrad and Gaik Ambartsoumian.
\newblock Exact inversion of the conical {R}adon transform with a fixed opening
  angle.
\newblock {\em Inverse Probl.}, 30(4):045007, 12, 2014.

\bibitem{Gro84}
C.~W. Groetsch.
\newblock {\em The Theory of Tikhonov Regularization for Fredholm Equations of
  the First Kind}.
\newblock Pitman, Boston, 1984.

\bibitem{Hal14a}
M.~Haltmeier.
\newblock Exact reconstruction formulas for a {R}adon transform over cones.
\newblock {\em Inverse Probl.}, 30(3), 2014.

\bibitem{Hansen81circular}
E.~W. Hansen.
\newblock Circular harmonic image reconstruction: {e}xperiments.
\newblock {\em Appl. Opt.}, 20(13):2266--2274, 1981.

\bibitem{Han98}
P.~C. Hansen.
\newblock {\em Rank-Deficient and Discrete Ill-Posed Problems}.
\newblock SIAM Monographs on Mathematical Modeling and Computation. SIAM,
  Philadelphia, PA, 1998.

\bibitem{hawkins1988circular}
W.~G. Hawkins, P.~K. Leichner, and N.-C. Yang.
\newblock The circular harmonic transform for spect reconstruction and boundary
  conditions on the fourier transform of the sinogram.
\newblock {\em {IEEE} Trans. Med. Imag.}, 7(2):135--138, 1988.

\bibitem{HirTom03}
M.~Hirasawa and T.~Tomitani.
\newblock An analytical image reconstruction algorithm to compensate for
  scattering angle broadening in {C}ompton cameras.
\newblock {\em Phys. Med. Biol.}, 48(8):1009, 2003.

\bibitem{inouye1989image}
T.~Inouye, K.~Kose, and A.~Hasegawa.
\newblock Image reconstruction algorithm for single-photon-emission computed
  tomography with uniform attenuation.
\newblock {\em Physics in Medicine and Biology}, 34(3):299--304, 1989.

\bibitem{JunMoo15}
C.~Jung and S.~Moon.
\newblock Inversion formulas for cone transforms arising in application of
  {C}ompton cameras.
\newblock {\em Inverse Probl.}, 31(1):015006, 20, 2015.

\bibitem{Linz}
P.~Linz.
\newblock {\em Analytical and numerical methods for {V}olterra equations},
  volume~7 of {\em SIAM Studies in Applied Mathematics}.
\newblock SIAM, Philadelphia, PA, 1985.

\bibitem{Maxim2009}
V.~Maxim, M.~Frande{\c{s}}, and R.~Prost.
\newblock Analytical inversion of the {C}ompton transform using the full set of
  available projections.
\newblock {\em Inverse Probl.}, 25(9):095001, 21, 2009.

\bibitem{Moon16a}
S.~Moon.
\newblock On the determination of a function from its conical {R}adon transform
  with a fixed central axis.
\newblock {\em SIAM J. Math. Anal.}, 48(3):1833--1847, 2016.

\bibitem{moon2016analytic}
S.~Moon and M.~Haltmeier.
\newblock Analytic inversion of a conical {R}adon transform arising in
  application of {C}ompton cameras on the cylinder.
\newblock \url{http://arxiv.org/abs/1607.00867}, 2016.

\bibitem{MorEtAl10}
M.~Morvidone, M.~K. Nguyen, T.~T. Truong, and H.~Zaidi.
\newblock On the {V}-line {R}adon transform and its imaging applications.
\newblock {\em Int. J. Biomed. Imaging}, 2010:208179, 6, 2010.

\bibitem{Per75}
R.~M. Perry.
\newblock Reconstructing a function by circular harmonic analysis of line
  integrals.
\newblock In {\em Image Processing for 2-D and 3-D Reconstruction from
  Projections: Theory and Practice in Medicine and the Physical Sciences}.
  Digest of technical papers, Stanford, California, 1975.

\bibitem{Plato12}
R.~Plato.
\newblock The regularizing properties of the composite trapezoidal method for
  weakly singular {V}olterra integral equations of the first kind.
\newblock {\em Adv. Comput. Math.}, 36(2):331--351, 2012.

\bibitem{puro2001cormack}
Alfred Puro.
\newblock {C}ormack-type inversion of exponential {R}adon transform.
\newblock {\em Inverse Problems}, 17(1):179, 2001.

\bibitem{scherzer2009variational}
O.~Scherzer, M.~Grasmair, H.~Grossauer, M.~Haltmeier, and F.~Lenzen.
\newblock {\em Variational methods in imaging}, volume 167 of {\em Applied
  Mathematical Sciences}.
\newblock Springer, New York, 2009.

\bibitem{schiefeneder2016radon}
Daniela Schiefeneder and Markus Haltmeier.
\newblock The {R}adon transform over cones with vertices on the sphere and
  orthogonal axes, 2016.

\bibitem{Sin83}
M.~Singh.
\newblock An electronically collimated gamma camera for single photon emission
  computed tomography. part {I}: Theoretical considerations and design
  criteria.
\newblock {\em Med. Phys.}, 10(421):1983, 1983.

\bibitem{Smi05}
B.~Smith.
\newblock Reconstruction methods and completeness conditions for two {C}ompton
  data models.
\newblock {\em J. Opt. Soc. Am. A}, 22(3):445--459, 2005.

\bibitem{smith2011line}
B.~Smith.
\newblock Line-reconstruction from {C}ompton cameras: data sets and a camera
  design.
\newblock {\em Opt. Eng.}, 50(5):053204, 2011.

\bibitem{Terzioglu15}
F.~Terzioglu.
\newblock Some inversion formulas for the cone transform.
\newblock {\em Inverse Probl.}, 31(11):115010, 21, 2015.

\bibitem{TikArs77}
A.~N. Tikhonov and V.~Y. Arsenin.
\newblock {\em Solutions of Ill-Posed Problems}.
\newblock John Wiley \& Sons, Washington, D.C., 1977.

\bibitem{TodNigEve74}
R.~W. Todd, J.~M. Nightingale, and D.~B. Everett.
\newblock A proposed gamma camera.
\newblock {\em Nature}, 251:132--134, 1974.

\bibitem{tretiak80exponential}
O.~Tretiak and C.~Metz.
\newblock The exponential {R}adon transform.
\newblock {\em SIAM J. Appl. Math.}, 39(2):341--354, 1980.

\bibitem{Weiss71}
R.~Weiss and R.~S. Anderssen.
\newblock A product integration method for a class of singular first kind
  {V}olterra equations.
\newblock {\em Numer. Math.}, 18:442--456, 1971.

\bibitem{yarman07new}
C.~E. Yarman and B.~Yazici.
\newblock A new exact inversion method for exponential {R}adon transform using
  the harmonic analysis of the {E}uclidean motion group.
\newblock {\em Inverse Probl. Imaging}, 1(3):457--479, 2007.

\bibitem{you1999unified}
J.~You, Z.~Liang, and G.~L. Zeng.
\newblock A unified reconstruction framework for both parallel-beam and
  variable focal-length fan-beam collimators by a {C}ormack-type inversion of
  exponential {R}adon transform.
\newblock {\em {IEEE} Trans. Med. Imag.}, 18(1):59--65, 1999.

\end{thebibliography}

\end{document}